\newcommand\cA{\EuScript{A}}
\newcommand\cE{\EuScript{E}}
\newcommand\cG{\EuScript{G}}
\newcommand\cM{\EuScript{M}}
\newcommand\cR{\EuScript{R}}
\newcommand\cS{\EuScript{S}}
\newcommand\cU{\EuScript{U}}
\newcommand\cV{\EuScript{V}}
\newcommand{\bC}{\mathbb{C}}
\newcommand{\bR}{\mathbb{R}}
\newcommand{\bZ}{\mathbb{Z}}
\newcommand\ad{\operatorname{ad}}
\newcommand\Ad{\mathrm{Ad}}
\newcommand\Hol{\mathrm{Hol}}
\newcommand\Hom{\operatorname{Hom}}
\newcommand\End{\operatorname{End}}
\newcommand\ev{\operatorname{ev}}
\newcommand\rank{\operatorname{rank}}
\newcommand\res{\operatorname{res}}
\newcommand\tr{\operatorname{tr}}
\newcommand\hol{\operatorname{hol}}
\newcommand\coker{\operatorname{coker}}
\newcommand\PD{\operatorname{PD}}
\DeclareMathOperator{\im}{im}
\DeclareMathOperator{\id}{id}
\newcommand\pdr\partial
\newcommand\red{\mathrm{red}}
\newcommand\fla{\mathrm{flat}}
\newcommand\ZG{Z(G)}
\newcommand\fg{\mathfrak{g}}
\newcommand\oM{\overline M}
\newcommand\tM{\tilde{M}}
\newcommand\tP{\tilde{P}}
\newcommand\tD{\tilde{D}}
\newcommand\br{\tilde {\boldsymbol{r}}}
\newcommand\tgam{{\tilde\gam}}
\newcommand\tphi{{\tilde\phi}}
\newcommand\tPI{{\tilde\PI}}
\newcommand\Adp{{\Ad\circ\phi}}
\newcommand\Adtp{{\Ad\circ\tphi}}
\newcommand\al\alpha
\newcommand\gam\gamma
\newcommand\del\delta
\newcommand\eps\epsilon
\newcommand\sig\sigma
\newcommand\om\omega
\newcommand\upom\upomega
\newcommand\Gam\varGamma
\newcommand\Del\varDelta
\newcommand\PI\varPi
\newcommand\Om\varOmega
\newcommand\T{{\scalebox{.85}[.85]{$\scriptstyle\mathsf{T}$}}}
\newcommand{\sslash}{/\!\!/}
\newcommand\ii{\sqrt{-1}}
\newtheorem{thm}{Theorem}[section]
\newtheorem{lem}[thm]{Lemma}
\newtheorem{cor}[thm]{Corollary}
\newtheorem{pro}[thm]{Proposition}
\newtheorem{eg}[thm]{Example}
\numberwithin{equation}{section}
\newcommand\bra\langle
\newcommand\ket\rangle
\newcommand\two[4]{\Big\{\begin{array}{ll} {#1} & \mbox{if }{#2},  \\
   {#3} & \mbox{if } {#4}\end{array}}
\begin{document}

\begin{center}
{\bf\LARGE Conditions of smoothness of moduli spaces of flat connections\\
and of character varieties}
\vspace{1.5em}

Nan-Kuo Ho$^{1,a}$, Graeme Wilkin$^{2,b}$ and Siye Wu$^{1,c}$
\vspace{1ex}

{\small $^1\,$Department of Mathematics, National Tsing Hua University,
Hsinchu 30013, Taiwan\\

$^2\,$Department of Mathematics, National University of Singapore, Singapore
119076\\

$^a\,$E-mail address: {\tt nankuo@math.nthu.edu.tw}\\

$^b\,$E-mail address: {\tt graeme@nus.edu.sg}\\

$^c\,$E-mail address: {\tt swu@math.nthu.edu.tw}}
\end{center}
\vspace{1em}

\begin{quote}
{\bf Abstract.}
We use gauge theoretic and algebraic methods to examine sufficient conditions
for smooth points on the moduli space of flat connections on a compact
manifold and on the character variety of a finitely generated and presented
group.
We give a complete proof of the slice theorem for the action of the group of
gauge transformations on the space of flat connections.
Consequently, the slice is smooth if the second cohomology of the manifold
with coefficients in the semisimple part of the adjoint bundle vanishes.
On the other hand, we find that the smoothness of the slice for the character
variety of a finitely generated and presented group depends not only on the
second group cohomology but also on the relation module of the presentation.
However, when there is a single relator or if there is no relation among the
relators in the presentation, our condition reduces to the minimality of the
second group cohomology.
This is also verified using Fox calculus.
Finally, we compare the conditions of smoothness in the two approaches.

\medskip\noindent
2010 Mathematics Subject Classification.\ Primary 58D27; Secondary 57M05 20F05.
\end{quote}
\vspace{1em}

\section{Introduction}
Let $G$ be a reductive complex Lie group.
The moduli space of flat $G$-connections on a manifold $M$ is the quotient
of the space of flat reductive connections on principal $G$-bundles over $M$
modulo the group of gauge transformations.
It is well known that this moduli space can be identified with the character
variety of the fundamental group $\pi_1(M)$, i.e., the set of reductive
homomorphisms from $\pi_1(M)$ to $G$ modulo conjugations by $G$.

The motivation for using gauge theory is that one can compute information
about the deformation space in terms of the topology and geometry of the
bundle and the underlying manifold.
For example, on a Riemann surface the dimension of the moduli space follows
easily from the Riemann-Roch theorem (see for example \cite{AB}).
If the manifold is compact and K\"ahler, then Goldman and Millson \cite{GM}
were able to show that the singularities in the moduli space were quadratic.
In subsequent work, Simpson \cite{Sim} used the theory of Higgs bundles to
place restrictions on the fundamental group of a compact K\"ahler manifold.

On the other hand, one can study character varieties very concretely.
If $M$ is compact, then $\pi_1(M)$ is finitely generated and finitely
presented, and the character variety is the quotient of an affine variety
by $G$ acting on it algebraically.
Goldman \cite{G}, and later Goldman and Millson \cite{GM}, expressed
deformations in terms of group cohomology of $\pi_1(M)$.
This can be traced back to the work of Weil \cite{We} on deformations of
discrete subgroups of Lie groups and of Nijenhuis and Richardson
\cite{NR1,NR2} on deformations of graded Lie algebras and of homomorphisms
of Lie groups and Lie algebras.
A common ingredient in these studies is the cohomology of groups and algebras.

In this paper, we reexamine deformations in moduli spaces and in character 
varieties using the above two approaches, with an emphasis on finding
sufficient conditions for smooth points on these spaces.

In \S\ref{sec:gauge}, we study the local model and smoothness of the moduli
space of flat connections from the gauge theory construction.
Let $P$ be a principal $G$-bundle over a compact manifold $M$.
We give a complete proof of the slice theorem (Theorem~\ref{thm:slice}) for
the action on the space of flat connections on $P$ by the group $\cG(P)$ of
gauge transformations.
If a flat connection $D$ is good (i.e., reductive and having stabiliser $\ZG$),
then a neighbourhood of the gauge equivalence class $[D]$ in the moduli space
is homeomorphic to a neighbourhood of $D$ in the slice.
If in addition $H^2(M,\ad P)=H^2(M,Z(\fg))$, where $\fg$ is the Lie algebra of
$G$, then the moduli space is smooth at $[D]$ (Corollary~\ref{cor:mod}).
Attaining these results involves manipulation of infinite dimensional spaces
which has occurred in a number of contexts, for example, in the moduli of
(anti-)self-dual connections on four manifolds \cite{AHS,DK}, where the
structure group $G$ was assumed to be a compact Lie group.
However, in our situation, $G$ is complex reductive, and the action of $\cG(P)$
on the space of connections or the space of sections is not an isometry.
Instead, we adapt the proofs in \cite{Ki,Ko} for the moduli of holomorphic
bundles and of Hermitian-Einstein connections on K\"ahler manifolds to the
study of moduli of flat $G$-connections on Riemannian manifolds.
We include the proofs as we can not find them in the existing literature.
Finally, we compare the moduli spaces of flat connections on a non-orientable
manifold and of those on its orientation double cover
(Corollary~\ref{cor:nbhd}).

In \S\ref{sec:rep}, we study the same problems for the character variety
of any finitely generated and finitely presented group $\PI$, such as the
fundamental group of a compact manifold.
The space $\Hom(\PI,G)$ of homomorphisms from $\PI$ to $G$ is an affine
variety in the product of finite copies of $G$ defined by the relations
among the generators of $\PI$, and $G$ acts on it algebraically.
Therefore we can apply Luna's slice theorem \cite{Lu} to obtain similar
results as in the gauge theoretic approach.
Note that $H^2(\PI,\fg_\Adp)$ always contains $H^2(\PI,Z(\fg))$ and
$H^2(\PI,\fg_\Adp)=H^2(\PI,Z(\fg))$ is the condition under which any
infinitesimal deformation from $\phi$ can be integrated \cite{G}.
However, using the implicit function theorem, we found that
$\phi\in\Hom(\PI,G)$ is a smooth point on $\Hom(\PI,G)$ if the quantity
$\dim\Hom(\bar N,\fg_\Adp)^\PI-\dim H^2(\PI,\fg_\Adp)$ reaches its maximum at
$\phi$ (Corollary~\ref{cor:sm-hom}) instead of requiring the minimality of
$\dim H^2(\PI,\fg_\Adp)$ alone.
Here $\bar N$ is the relation module of the presentation of $\PI$ and
$\fg_\Adp$ is the Lie algebra $\fg$ on which $\PI$ acts via $\Adp$.
We then arrive at the same result in a different way: using Fox calculus and
its appearance in the free resolution of $\bZ$ by $\bZ\PI$-modules.
If there is a single relation among the generators of $\PI$ or if there are
no relations among the relators, then the above condition reduces to the
minimality of $\dim H^2(\PI,\fg_\Adp)$ (Proposition~\ref{pro:one}).
This is the case, for example, when $\PI$ is the fundamental group of
a compact (orientable or non-orientable) surface.
In this case, the moduli space of flat $G$-connections is smooth at $[\phi]$
if first, $\phi$ is reductive and its stabiliser is $\ZG$ (which implies
$H^0(\PI,\fg_\Adp)=Z(\fg)$) and second, $H^2(\PI,\fg_\Adp)=H^2(\PI,Z(\fg))$.
If the surface $M$ is orientable, then
$H^0(\PI,\fg_\Adp)\cong H^2(\PI,\fg_\Adp)$ by Poincar\'e duality, and hence
the conditions $H^0(\PI,\fg_\Adp)=Z(\fg)$ and $H^2(\PI,\fg_\Adp)=Z(\fg)$ are
equivalent \cite{G}.
But if $M$ is non-orientable, the two cohomology groups $H^0(\PI,\fg_\Adp)$
and $H^2(\PI,\fg_\Adp)$ are different and we need two conditions for
smoothness.
Furthermore, we find explicit formulas for both $H^0(\PI,\fg_\Adp)$ and
$H^2(\PI,\fg_\Adp)$ using Fox calculus (Proposition~\ref{pro:H02}).
We give an example to show that, in contrast to the case of a compact
orientable surface, points in $\Hom(\PI,G)$ with a stabiliser of minimal
dimension may not project to smooth points on the character variety.

In \S\ref{sec:compare}, we compare the gauge theoretic approach to moduli
spaces in \S\ref{sec:gauge} and the algebraic approach to character 
varieties in \S\ref{sec:rep}.
Using a spectral sequence, we find a relation between $H^2(M,\ad P)$ and
$H^2(\PI,\fg_\Adp)$, where $P$ is a flat $G$-bundle over $M$, $\PI=\pi_1(M)$,
and $\phi\in\Hom(\PI,G)$ is determined by the holonomy of $P$.
We find that the condition $H^2(M,\ad P)=H^2(M,Z(\fg))$ is stronger than
$H^2(\PI,\fg_\Adp)=H^2(\PI,Z(\fg))$ in general.
However, if $M$ is a compact (orientable or non-orientable) surface, then
$H^2(M,\ad P)=H^2(\PI,\fg_\Adp)$, and the sufficient conditions of smoothness
from the two approaches are identical.

In the Appendix, we present some examples of flat $G$-connections on
orientable and non-orientable manifolds (or homomorphisms from the fundamental
groups into $G$) that we referred to in the main text.

All classical complex Lie groups are reductive, and the complexification of a
compact Lie group is also complex reductive (cf.~\cite[Theorem~3, p.234]{Pr}).
Though we assumed that $G$ is a complex reductive Lie group throughout, our
methods clearly apply (with the exception of the use of Luna's slice theorem
in \S\ref{sec:luna}, which is not necessary when $G$ is compact) and our
results also hold when $G$ is a compact Lie group .

\medskip\noindent {\em Acknowledgments.}
The research of N.H.\ was supported by grant number 102-2115-M-007-003-MY2
from the National Science Council of Taiwan.
The research of G.W.\ was supported by grant number R-146-000-200-112 from
the National University of Singapore.
The research of S.W. was supported by grant number 105-2115-M-007-001-MY2
from the Ministry of Science and Technology of Taiwan.
The authors thank W.M.~Goldman for helpful discussions and the referee for
constructive comments.

\section{Gauge theory approach to moduli spaces}\label{sec:gauge}
\subsection{Preliminaries}
Let $M$ be a compact manifold of dimension $n$, $G$ be a complex reductive
group with Lie algebra $\fg$, and $P\to M$ be a principal $G$-bundle.
Let $\cA(P)$ be the set of $G$-connections on $P$ and $\cA^\fla(P)$ be the
subset of flat connections.
The group $\cG(P)$ of gauge transformations acts on $\cA(P)$ preserving
$\cA^\fla(P)$.
The centre $Z(G)$ of $G$ can be identified with a subgroup of $\cG(P)$
containing constant gauge transformations and it acts trivially on $\cA(P)$.

A connection $D$ on $P$ induces a connection $D^{\ad P}$ on the adjoint
bundle $\ad P:=P\times_\Ad\fg$ and defines the covariant differentials
$D_i=D^{\ad P}_i\colon\Om^i(M,\ad P)\to\Om^{i+1}(M,\ad P)$, where
$0\le i<\dim M$.
If $D$ is flat, then
\begin{equation}\label{eqn:cplx}
0\to\Om^0(M,\ad P)\stackrel{D_0}\longrightarrow\Om^1(M,\ad P)
\stackrel{D_1}\longrightarrow\cdots\stackrel{D_{n-1}}\longrightarrow
\Om^n(M,\ad P)\to0
\end{equation}
is an elliptic complex;
here $D_0$ is the infinitesimal action of $\cG(P)$ on $\cA(P)$.
Let $H^i(M,\ad P):=\ker D_i/\im D_{i-1}$ be the de Rham cohomology
groups with coefficients in $\ad P$.
Since $M$ is compact, the {\em Betti numbers}
$b_i(M,\ad P):=\dim_\bC H^i(M,\ad P)$ are finite.
Let $\chi(M,\ad P):=\sum_{i=0}^n(-1)^ib_i(M,\ad P)$ be the
{\em Euler characteristic} of the complex \eqref{eqn:cplx}.
It follows from the index formula \cite{AS3} that
$\chi(M,\ad P)=\chi(M)\dim_\bC G$
(see for example \cite[Eqn (4.117), p.117]{Na}).

The Lie algebra $\fg$ of a reductive Lie group $G$ has a decomposition
$\fg=Z(\fg)\oplus\,\fg'$, where $\fg':=[\fg,\fg]$.
Let $\Ad'$, $\ad'$ denote the restrictions to $\fg'$ of the adjoint
representations of $G$ and $\fg$, respectively.
Then there is also a decomposition $\ad P=Z(\fg)_M\oplus\ad'P$, where
$Z(\fg)_M$ is the trivial bundle over $M$ with fiber $Z(\fg)$ and
$\ad'P:=P\times_{\Ad'}\,\fg'$.
The decomposition is preserved by the connection $D$, which is trivial
on $Z(\fg)_M$ and will be denoted by $D'$ on $\ad'P$.
We have
\begin{align*}
\Om^i(M,\ad P)=\Om^i(M,Z(\fg))\oplus\Om^i(M,\ad'P),\quad
& H^i(M,\ad P)=H^i(M,Z(\fg))\oplus H^i(M,\ad'P), \\
b_i(M,\ad P)=b_i(M)\dim_\bC Z(\fg)+b_i(M,\ad'P),&\quad
\chi(M,\ad'P)=\chi(M)\dim_\bC(G/Z(G)).
\end{align*}

There is a $G$-invariant symmetric non-degenerate complex bilinear form
$(\cdot,\cdot)$ on $\fg$ such that the decomposition $\fg=Z(\fg)\oplus\,\fg'$
is orthogonal.
It defines a fiberwise complex bilinear form on $\ad P$ preserved by the
connection and such that the splitting $\ad P=Z(\fg)_M\oplus\ad'P$ is also
orthogonal.

So far, the compact manifold $M$ can be either orientable or non-orientable.
Now we assume that $M$ is orientable.
Using the above complex bilinear form of $\ad P$ and an orientation on $M$,
there is a non-degenerate complex bilinear pairing $\bra\cdot,\cdot\ket$ on
$\Om^\bullet(M,\ad P)$ given by
\begin{equation}\label{eqn:pairing}
\Om^i(M,\ad P)\times\Om^{n-i}(M,\ad P)\to\bC,\qquad
(\al,\beta)\mapsto\bra\al,\beta\ket:=\int_M(\al,\wedge\beta).
\end{equation}
This induces the following Poincar\'e duality on the de Rham cohomology
with coefficients in a flat bundle.

\begin{lem}\label{lem:dual}
If $M$ is orientable, there is a non-degenerate complex bilinear pairing,
still denoted by $\bra\cdot,\cdot\ket$,
\begin{equation}\label{eqn:Poincare}
H^i(M,\ad P)\times H^{n-i}(M,\ad P)\to\bC.
\end{equation}
Hence there is an isomorphism $H^i(M,\ad P)^*\cong H^{n-i}(M,\ad P)$,
and $b_i(M,\ad P)=b_{n-i}(M,\ad P)$.
The results remain true if the bundle $\ad P$ is replaced by $\ad'P$.
\end{lem}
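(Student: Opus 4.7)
The plan is to verify that the cochain-level pairing \eqref{eqn:pairing} descends to a well-defined pairing \eqref{eqn:Poincare} on cohomology, and then to establish its non-degeneracy by using the fiberwise bilinear form to identify $\ad P$ with its dual as a flat bundle. For the descent, the key observation is that $D$ is a derivation with respect to the fiberwise bilinear form on $\ad P$, which yields the Leibniz rule
\[
d(\al,\wedge\gam)=(D\al,\wedge\gam)+(-1)^i(\al,\wedge D\gam)
\]
for $\al\in\Om^i(M,\ad P)$ and $\gam\in\Om^{n-i-1}(M,\ad P)$. If $\al$ is closed, then Stokes's theorem gives $\bra\al,D\gam\ket=0$, and a symmetric computation handles the case when the first entry is exact; hence the pairing passes to cohomology.

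For non-degeneracy, the $G$-invariant non-degenerate bilinear form on $\fg$, being preserved by $D$, induces an isomorphism of flat bundles $\ad P\cong(\ad P)^*$. Under this identification, \eqref{eqn:Poincare} becomes the classical Poincaré duality pairing $H^i(M,\ad P)\times H^{n-i}(M,(\ad P)^*)\to\bC$ for de Rham cohomology with flat coefficients on a compact orientable manifold, whose non-degeneracy is standard. One way to see it is via Hodge theory: after choosing an auxiliary Riemannian metric on $M$ and a positive-definite Hermitian metric on $\fg$, the Laplacian $DD^*+D^*D$ is elliptic and self-adjoint, each cohomology class has a unique harmonic representative, and the Hodge star produces the required non-degenerate pairing on harmonics. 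Composing with the self-duality of $\ad P$ then gives both the isomorphism $H^i(M,\ad P)^*\cong H^{n-i}(M,\ad P)$ and the equality of Betti numbers.

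The claim for $\ad' P$ follows immediately: since the decomposition $\fg=Z(\fg)\oplus\fg'$ is orthogonal under $(\cdot,\cdot)$, the restricted form on $\fg'$ is again $G$-invariant and non-degenerate, the splitting $\ad P=Z(\fg)_M\oplus\ad'P$ is preserved by both $D$ and the pairing, and so the argument applies verbatim with $\ad P$ replaced by $\ad'P$ throughout. The only step requiring real care is the bookkeeping of signs in the Leibniz rule; the non-degeneracy itself is essentially a black-box consequence of the self-duality of $\ad P$ as a flat bundle together with standard Poincaré duality for flat coefficients.
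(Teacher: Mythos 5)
Your proof is correct in substance but takes a genuinely different route from the paper's. The paper never invokes Hodge theory: it works with the dual complex of currents $\Om^i(M,\ad P)'$, observes that the pairing \eqref{eqn:pairing} gives an inclusion of smooth $(n-i)$-forms into $i$-currents intertwining $(-1)^{i-1}D_{n-i}$ with the transpose $D_{i-1}^\T$, and then cites a version of de Rham's theorem to identify the current (co)homology with the smooth-form cohomology; the block-diagonal splitting then handles $\ad'P$, exactly as you do. Your approach instead uses the $G$-invariant bilinear form to view $\ad P$ as self-dual as a flat bundle, reducing to Poincar\'e duality for de Rham cohomology with flat coefficients, for which you sketch a Hodge-theoretic proof. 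Both are legitimate; the paper's current-based argument is closer to a direct functional-analytic duality statement and avoids any auxiliary Hermitian structure, while yours is perhaps more familiar and makes the role of the invariant form on $\fg$ more transparent. One point in your Hodge sketch is stated too quickly: the non-degeneracy on harmonics does not come from the Hodge star $*$ alone, since the pairing $\int_M(\al,\wedge\beta)$ is complex bilinear and the form $(\cdot,\cdot)$ on $\fg$ cannot be positive definite over $\bC$; rather, one combines $*$ with the conjugate-linear isomorphism $\ad P\to(\ad P)^*$ coming from the auxiliary Hermitian metric, so that pairing a harmonic $\al$ against the resulting $\beta$ produces the $L^2$ norm $\|\al\|^2>0$, after which the bilinear self-duality $\ad P\cong(\ad P)^*$ converts this into the required statement. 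Making that step explicit would close the only small gap in an otherwise sound alternative proof.
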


\begin{proof}
Consider the dual complex of \eqref{eqn:cplx},
\[ 0\gets\Om^0(M,\ad P)'\stackrel{D_0^\T}\longleftarrow\Om^1(M,\ad P)'
\stackrel{D_1^\T}\longleftarrow\cdots\stackrel{D_{n-1}^\T}\longleftarrow
\Om^n(M,\ad P)'\gets0,  \]
where $\Om^i(M,\ad P)'$ is the space of $(n-i)$-currents on $M$ with
coefficients in $\ad P$ and $D_i^\T$ is the transpose of $D_i$.
Let $H_i(\Om^\bullet(M,\ad P)')=\ker D_{i-1}^\T/\im D_i^\T$.
Then \eqref{eqn:pairing} induces a non-degenerate pairing
\[ H^i(M,\ad P)\times H_i(\Om^\bullet(M,\ad P)')\to\bC. \]
The non-degenerate pairing \eqref{eqn:pairing} also defines an inclusion
$\Om^{n-i}(M,\ad P)\hookrightarrow\Om^i(M,\ad P)'$, and the restriction of the
map $D_{i-1}^\T$, whose domain is $\Om^i(M,\ad P)'$, to $\Om^{n-i}(M,\ad P)$ is
$(-1)^{i-1}D_{n-i}$.
By a slight generalisation (to allow coefficients in $\ad P$) of
\cite[Thm~14, p.79]{dR}, the above (co)homology groups defined using currents
can also be computed by smooth forms, i.e.,
$H_i(\Om^\bullet(M,\ad P)')\cong H^{n-i}(M,\ad P)$.
The results on $\ad P$ then follow.
Finally, the pairing \eqref{eqn:pairing} splits block-diagonally under the
decomposition $\Om^1(M,\ad P)=\Om^1(M,Z(\fg))\oplus\Om^i(M,\ad'P)$, and
the results  on $\ad'P$ also follow.
\end{proof}

\subsection{The slice theorem}
A flat connection $D\in\cA^\fla(P)$ is {\em simple} if the kernel of
$D'_0\colon\Om^0(M,\ad'P)\to\Om^1(M,\ad'P)$ is zero, or if $H^0(M,\ad'P)=0$.
If $D$ is a simple connection, then the Lie algebra of the stabiliser
$\cG(P)_D$ of $D$ is $\ker(D_0)=H^0(M,\ad P)\cong Z(\fg)$.
So the stabiliser has minimal dimension as $\ZG$ always acts trivially
on $\cA(P)$.
Conversely, if the stabiliser has the same dimension as $Z(G)$, then
$H^0(M,\ad'P)=0$ and the connection is simple.
We choose a Riemannian metric on $M$ and a Hermitian structure on $\ad P$
such that $\ad'P$ is orthogonal to $Z(\fg)_M$.
Then there are Hermitian inner products on $\Om^i(M,\ad P)$, $\Om^i(M,\ad'P)$
regardless of whether $M$ is orientable or not.
Let $D_i^\dagger$ be the formal adjoint of $D_i$.
For $D\in\cA^\fla(P)$, the {\em slice} at $D$ is the subset
\begin{equation}\label{eqn:slice}
\cS(D):=\big\{\al\in\Om^1(M,\ad P):
D_1\al+\tfrac12[\al,\al]=0,\,D_0^\dagger\al=0\big\}.
\end{equation}
The first goal is to prove a slice theorem for an arbitrary principal bundle
with a reductive structure group.
Let $\Om^\bullet_k(M,\ad P)$, $\cA^\fla_k(P)$, $\cG_k(P)$, $\cS_k(D)$ be the
respective spaces completed according to the Sobolev norm $\|\cdot\|_{2,k}$.

\begin{thm}\label{thm:slice}
Let $M$ be a compact manifold, $G$ be a complex reductive Lie group and
$P\to M$ be a $G$-bundle.
Fix $k>\frac{n}2$.
Suppose $D\in\cA^\fla_k(P)$ is simple.
Then there is a neighborhood $\cV$ of $0\in\cS_k(D)$ such that the map
\[\cG_{k+1}(P)/Z(G)\times\cV\to\cA^\fla_k(P),\qquad
  ([g],\al)\mapsto g\cdot(D+\al) \]
is a homeomorphism onto its image.
\end{thm}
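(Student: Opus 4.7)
The plan is to apply the implicit function theorem (IFT) to a gauge-fixing map, using the elliptic theory of the complex \eqref{eqn:cplx}. First, since $D$ is flat, $D_0^\dagger D_0$ is self-adjoint elliptic with kernel $\ker D_0 = H^0(M, \ad P)$. Simplicity of $D$ gives $H^0(M, \ad'P) = 0$, so this kernel equals $Z(\fg)$, the Lie algebra of $\ZG \subset \cG(P)$. Consequently $D_0^\dagger D_0 \colon \Om^0_{k+1}(M, \ad P) \to \Om^0_{k-1}(M, \ad P)$ is Fredholm of index zero and restricts to a Banach space isomorphism between the $L^2$-orthogonal complements of $Z(\fg)$ in source and target.

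Next I would introduce the gauge-fixing map
\[ \Psi\colon \cG_{k+1}(P) \times \Om^1_k(M, \ad P) \to \Om^0_{k-1}(M, \ad P), \qquad (g, \beta) \mapsto D_0^\dagger\bigl(g \cdot(D+\beta) - D\bigr). \]
Under the hypothesis $k > \tfrac{n}{2}$, Sobolev multiplication makes $\cG_{k+1}(P)$ a Banach Lie group with Lie algebra $\Om^0_{k+1}(M,\ad P)$, and renders both its action on connections and $\Psi$ smooth. The partial derivative of $\Psi$ at $(e, 0)$ in the $g$-direction is $u \mapsto D_0^\dagger D_0 u$, an isomorphism from a complement of $Z(\fg)$ in $\Om^0_{k+1}$ onto the image of $D_0^\dagger$ by the first step. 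Restricting $g$ to such a complement and applying the IFT furnishes neighborhoods $\cU$ of $e$ in $\cG_{k+1}(P)$ and $\cV_1$ of $0$ in $\Om^1_k(M,\ad P)$, together with a smooth map $\beta \mapsto g(\beta) \in \cU$ satisfying $g(0) = e$ and $\Psi(g(\beta), \beta) = 0$, with $g(\beta)$ the unique such element in $\cU$ modulo $\ZG$.

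Setting $\cV := \{g(\beta) \cdot(D+\beta) - D : \beta \in \cV_1,\ D+\beta \in \cA^\fla_k(P)\}$ gives a neighborhood of $0$ in $\cS_k(D)$: for flat $D+\beta$, its gauge transform $g(\beta) \cdot (D+\beta)$ is flat, so $\al := g(\beta) \cdot (D+\beta) - D$ satisfies both $D_1 \al + \tfrac{1}{2}[\al, \al] = 0$ and $D_0^\dagger \al = 0$. For the map $F([g], \al) := g \cdot (D + \al)$ to be a homeomorphism onto its image, I would verify injectivity and continuity of the inverse. For injectivity: if $g_1 \cdot (D + \al_1) = g_2 \cdot (D + \al_2)$ with $\al_i \in \cV$ sufficiently small, then $h := g_2^{-1} g_1$ lies in $\cU$ by the local embedding property of the orbit map $[g] \mapsto g \cdot D$ (itself a consequence of the IFT step applied to $\beta = 0$), and from $\Psi(h, \al_1) = D_0^\dagger \al_2 = 0 = \Psi(e, \al_1)$ the uniqueness clause forces $h \in \ZG$; hence $[g_1] = [g_2]$ and then $\al_1 = \al_2$. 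Continuity of the inverse follows from the smooth dependence of $g(\beta)$ on $\beta$ provided by the IFT.

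The main obstacle is handling the noncompact structure group: since $G$ is complex reductive, the action of $\cG_{k+1}(P)$ on $\Om^1_k(M,\ad P)$ is not by isometries, so the standard estimates available when $G$ is compact (as in \cite{AHS,DK}) do not apply directly. This is circumvented by adapting the arguments of Kim \cite{Ki} and Kobayashi \cite{Ko} from the Hermitian-Einstein setting. A subsidiary technical burden is verifying that $\cG_{k+1}(P)$ is a genuine Banach Lie group whose action on $\Om^1_k(M, \ad P)$ is smooth, which rests on the Sobolev multiplication theorem valid for $k > \tfrac{n}{2}$.
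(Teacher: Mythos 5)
Your first half — the gauge-fixing map, the computation of the $u$-derivative as $-D_0^\dagger D_0$, the Fredholm/IFT argument to produce a unique small $u\in(\ker D_0)^\perp_{k+1}$ with $D_0^\dagger(e^u\cdot(D+\al)-D)=0$, and the observation that simplicity makes $u\mapsto[e^u]$ a local embedding into $\cG_{k+1}(P)/Z(G)$ — matches the paper's argument. This yields the homeomorphism \emph{only for gauge transformations close to $Z(G)$}.

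The gap is in your injectivity step. You write that if $g_1\cdot(D+\al_1)=g_2\cdot(D+\al_2)$ with $\al_i$ small, then $h:=g_2^{-1}g_1$ ``lies in $\cU$ by the local embedding property of the orbit map.'' That is not a valid inference: the local embedding property only describes the orbit map near $e$, and gives no a priori reason that an \emph{arbitrary} $h\in\cG_{k+1}(P)$ carrying $D+\al_1$ to $D+\al_2$ is close to the identity. For compact $G$ one can argue via isometry and properness of the action; here $G$ is complex reductive, so the action is not by isometries, and this is precisely the crux you correctly flagged but did not resolve. Declaring that ``this is circumvented by adapting the arguments of Kim and Kobayashi'' without supplying the estimate is the missing step.

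The paper closes this gap as follows. Pass to the associated vector bundle $\ad'P$: since $D$ is simple, the induced connection on $\ad'P$ has $\ker D_0^{\End(\ad'P)}=0$ when $\fg'$ is simple, and the Kim–Kobayashi uniqueness argument for vector bundles gives $\Ad'(g)=c(\id_{\ad'P}+g')$ with $c\ne0$ a scalar and $\|g'\|_{2,k+1}$ bounded in terms of $\|\al_1\|_{2,k}+\|\al_2\|_{2,k}$. One must then separately control the scalar $c$: this is done by exploiting that $\fg'$ is non-Abelian — choosing $\xi,\eta$ with $\zeta:=[\xi,\eta]\ne0$, applying $\Ad'(g)$ to $\zeta=[\xi,\eta]$, and deriving two-sided bounds on $c$ in terms of $\|g'\|$, which force $c\to1$ as $\al_1,\al_2\to0$. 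Finally the general semisimple case is handled by decomposing $\fg'=\oplus\fg_i$ into simple factors and running the argument on each $\ad'_iP$. None of this is implicit in the IFT step; it is a separate quantitative estimate, and without it the proof does not establish the full (not merely infinitesimal) slice property.
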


\begin{proof}
We follow in part the proof of slice theorem in \cite{Ki,Ko}, though
the context there was on holomorphic structure on vector bundles.
Let $(\ker D_0)^\perp_{k+1}$ be the orthogonal complement of $\ker D_0$
in $\Om^0_{k+1}(M,\ad P)$.
Consider the map
\[ F\colon(\ker D_0)^\perp_{k+1}\times\Om^1_k(M,\ad P)\to\Om^0_k(M,\ad P),
   \qquad (u,\al)\mapsto D_0^\dagger(e^u\cdot(D+\al)-D).   \]
The differential of $F$ along $u$ is $-D_0^\dagger D_0$, which is an
invertible operator on $(\ker D_0)^\perp_{k+1}$.
By the implicit function theorem, if $\al$ is in a sufficiently small
neighbourhood $\cV$, there exists $u\in(\ker D_0)^\perp_{k+1}$ such that
$\beta:=e^u\cdot(D+\al)-D$ satisfies $D_0^\dagger\beta=0$.
Since $D$ is simple, there is a small neighbourhood $\cU$ of
$0\in(\ker D_0)^\perp_{k+1}$ such that the map
$u\in\cU\mapsto[e^u]\in\cG_{k+1}(P)/Z(G)$ is a diffeomorphism onto its image.
In this way, we obtain the desired homeomorphism provided we restrict to
small (close to identity) gauge transformations.
To get the full version, suppose $D+\al_1,D+\al_2$ are flat connections
such that $\al_1,\al_2$ are small.
We want to show that if $g\in\cG_{k+1}(P)$ such that $g\cdot(D+\al_1)=D+\al_2$,
then $[g]\in\cG_{k+1}(P)/Z(G)$ is also small, i.e., $g$ is close to $Z(G)$.
If $\fg'$ is a simple Lie algebra, then the simple connection $D$ on $P$
induces a simple connection on $\ad'P$, i.e., the kernel of
$D_0^{\End(\ad'P)}\colon\Om^0(M,\End(\ad'P))\to\Om^1(M,\End(\ad'P))$ is zero.
We apply the arguments in \cite{Ki,Ko} to the vector bundle $\ad'P$.
Since $D^{\ad'P}+\ad'(\al_1),D^{\ad'P}+\ad'(\al_2)$ are connections on $\ad'P$
related by the gauge transformation $\Ad'(g)$, we have
$\Ad'(g)=c(\id_{\ad'P}+g')$ for some constant $c\ne0$ and a small
$g'\in\Gam_{k+1}(\End(\ad'P))$.
More precisely, we have
\[ \|g'\|_{2,k+1}\le
   \frac{c_2\|\id_{\ad'P}\|_{2,k+1}(\|\al_1\|_{2,k}+\|\al_2\|_{2,k})}
   {c_1-c_2(\|\al_1\|_{2,k}+\|\al_2\|_{2,k})}, \]
where $c_1,c_2>0$ are constants such that
$\|[D^{\ad'P},g']\|_{2,k}\ge c_1\|g'\|_{2,k+1}$ and
$\|g'\al\|_{2,k}\le c_2\|g'\|_{2,k+1}\|\al\|_{2,k}$, respectively.
We want to show that $c$ is close to $1$, and hence $\Ad'(g)$ is close to
$\id_{\ad'P}$, or $g$ is close to $Z(G)$.
Let $c_3>0$ be a constant such that
$\|[\xi,\eta]\|_{2,k}\le c_3\|\xi\|_{2,k}\|\eta\|_{2,k}$ for all
$\xi,\eta\in\Om^0_k(M,\ad'P)$.
Since $\fg'$ is non-Abelian, we can fix $\xi,\eta$ such that
$\zeta:=[\xi,\eta]\in\Om^0(M,\ad'P)$ is non-zero.
Let $c_4=\|\zeta\|_{2,k}$, $c_5=c_2c_3\|\xi\|_{2,k}\|\eta\|_{2,k}$ and
$c_6=c_2c_5$ be positive constants.
Then we have
\[ \|[\xi,g'\eta]\|_{2,k}\mbox{ and }
\|[g'\xi,\eta]\|_{2,k}\le c_5\|g'\|_{2,k+1},
\qquad \|[g'\xi,g'\eta]\|_{2,k}\le c_6\|g'\|_{2,k+1}^2. \]
Finally $\Ad'(g)\zeta=[\Ad'(g)\xi,\Ad'(g)\eta]$, or
$\zeta+g'\zeta=c\,[\xi+g'\xi,\eta+g'\eta]$ implies
\[ \frac{c_4(1-c_2\|g'\|_{2,k+1})}
{c_4+2c_5\|g'\|_{2,k+1}+c_6\|g'\|_{2,k+1}^2} \le c\le
\frac{c_4(1+c_2\|g'\|_{2,k+1})}
{c_4-2c_5\|g'\|_{2,k+1}-c_6\|g'\|_{2,k+1}^2}. \]
When $\|\al_1\|_{2,k}$, $\|\al_2\|_{2,k}$ are sufficiently small, so
is $\|g'\|_{2,k+1}$, and thus $c$ is sufficiently close to $1$.
Therefore $\Ad'(g)$ is close to the identity.
In general, $\fg'$ is semisimple and suppose $\fg'=\oplus_{i=1}^r\fg_i$
is a decomposition into simple Lie algebras.
Let $\Ad'_i$ be the restriction of the adjoint representation of $G$ to
$\fg_i$.
Then there is a decomposition of vector bundles $\ad'P=\oplus_{i=1}^r\ad'_iP$,
where $\ad'_iP:=P\times_{\ad'_i}\fg_i$, which is respected by both the gauge
transformation $\Ad'(g)$ and the induced connection on $\ad'P$.
The above argument shows that on each $\ad'_iP$, $\Ad'_i(g)$ is close to the
identity.
Therefore so is the whole $\Ad'(g)$ and hence the result.
\end{proof}

In the above proof, if $\al\in\Om^1(M,\ad P)$ is smooth, then so is the
solution $u\in(\ker D_0)_{k+1}^\perp$ to the elliptic equation $F(u,\al)=0$.
Similarly, if $\al_1,\al_2\in\Om^1(M,\ad P)$ are smooth, then so is the
solution $g\in\cG(P)$ to the elliptic equation
$g^{-1}Dg=\al_1-\Ad(g^{-1})\al_2$.
Therefore the result of Theorem~\ref{thm:slice} remains true if the Sobolev
spaces are replaced by spaces $\Om^\bullet(M,\ad P)$, $\cA(P)$, $\cG(P)$,
$\cS(D)$ of smooth objects.

A flat connection $D$ is {\em reductive} if the closure of the holonomy group
$\Hol(D)$ is contained in the Levi subgroup of any parabolic subgroup
containing $\Hol(D)$.
Let $\cA^{\fla,\red}(P)$ be the set of flat, reductive connections on $P$.
The {\em moduli space of flat connections} on $P$ is
$\cM^\fla(P):=\cA^{\fla,\red}(P)/\cG(P)$.
A flat connection $D$ is reductive if and only if its orbit under the group
$\cG(P)$ of gauge transformations is closed (see \S\ref{sec:compare}).
Thus the quotient topology on the moduli space $\cM^\fla(P)$ is Hausdorff.
If a flat connection $D$ is simple, then its stabiliser $\cG(P)_D$ contains
$Z(G)$ as a subgroup of equal dimension.
A flat connection $D$ is {\em good} (cf.~\cite{JM}) if it is reductive and
its stabiliser $\cG(P)_D$ is $Z(G)$.
The slice at a good connection gives a local model for the moduli space.

\begin{cor}\label{cor:slice}
Let $D$ be a reductive, simple flat connection on $P$.
Then the map
\[ p\colon \cS(D)/\cG(P)_D\to\cA^\fla(P)/\cG(P),\qquad[\al]\mapsto[D+\al] \]
is a homeomorphism of a neighbourhood of $[0]$ in $\cS(D)/\cG(P)_D$ onto a
neighbourhood of $[D]$ in $\cM^\fla(P)$.
If in addition $D$ is good, then the homeomorphism is from a neighbourhood
of $0$ in $\cS(D)$ onto a neighbourhood of $[D]$ in $\cM^\fla(P)$.
\end{cor}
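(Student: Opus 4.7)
The strategy is to use Theorem~\ref{thm:slice} to parametrise a neighbourhood of $D$ in $\cA^\fla(P)$ by pairs $([g],\al)$ with $[g]\in\cG(P)/Z(G)$ close to the identity coset and $\al\in\cV\subset\cS(D)$ small, and then to pass to the quotient by $\cG(P)$ while keeping track of the residual action of $\cG(P)_D/Z(G)$ on the slice. First I would verify that $\cG(P)_D$ acts on $\cS(D)$ by $\al\mapsto\Ad(g)\al$: the identity $g\cdot(D+\al)=D+\Ad(g)\al$ for $g\in\cG(P)_D$ makes preservation of the flatness condition automatic, and because $D$ is reductive the stabiliser $\cG(P)_D$ is a finite-dimensional reductive Lie group whose action can be arranged to preserve the Hermitian structure on $\ad P$ by averaging over a maximal compact subgroup; consequently $\Ad(g)$ commutes with $D_0^\dagger$ and the Coulomb condition $D_0^\dagger\al=0$ is preserved. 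With this, $p$ is well-defined and continuous in the quotient topology.

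Local surjectivity onto a neighbourhood of $[D]$ in $\cM^\fla(P)$ is immediate from Theorem~\ref{thm:slice}: any flat connection close to $D$ has the form $g\cdot(D+\al)$ with $\al\in\cV$, so its class in $\cM^\fla(P)$ equals $p([\al])$. For injectivity, suppose $\al_1,\al_2\in\cV$ and $g\in\cG(P)$ satisfy $g\cdot(D+\al_1)=D+\al_2$. From $g\cdot D-D=\al_2-\Ad(g)\al_1$ one sees that $g\cdot D$ is close to $D$, so $g$ is close to the closed subgroup $\cG(P)_D$. Since $D$ is simple, the Lie algebra of $\cG(P)_D$ is $\ker D_0\cong Z(\fg)$, so $\cG(P)_D$ is a closed finite-dimensional Lie subgroup of $\cG(P)$, and the implicit function theorem supplies a local transversal together with a factorisation $g=g_Dh$ with $g_D\in\cG(P)_D$ and $h\in\cG(P)$ close to the identity. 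Then $h\cdot(D+\al_1)=D+\Ad(g_D^{-1})\al_2$, and after shrinking $\cV$ both sides lie in the slice; the small-gauge-transformation part of the proof of Theorem~\ref{thm:slice} now forces $h\in Z(G)$, whence $g\in\cG(P)_D$ and $[\al_1]=[\al_2]$ in $\cS(D)/\cG(P)_D$. Continuity of the inverse follows from the continuous dependence of the solutions provided by the implicit function theorem in the proof of the slice theorem.

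If in addition $D$ is good, then $\cG(P)_D=Z(G)$ acts trivially on $\cA(P)$ and hence on $\cS(D)$, so $\cS(D)/\cG(P)_D=\cS(D)$ and the homeomorphism is directly from a neighbourhood of $0\in\cS(D)$ onto a neighbourhood of $[D]$ in $\cM^\fla(P)$. The main obstacle is the injectivity step for simple but non-good $D$, specifically the factorisation $g=g_Dh$: this requires a smooth local slice for the right action of $\cG(P)_D$ on $\cG(P)$ near the identity, which is available because $\cG(P)_D$ is closed and finite-dimensional (of dimension $\dim Z(G)$). A secondary subtlety is arranging the Hermitian structure on $\ad P$ so that all of $\cG(P)_D$ acts by isometries, and hence preserves Coulomb gauge; this is done by choosing the structure $\cG(P)_D$-equivariantly from the outset, which is possible by reductivity of $\cG(P)_D$.
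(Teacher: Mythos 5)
Your overall strategy (use Theorem~\ref{thm:slice} to establish a local bijection, then argue continuity of the inverse) matches the paper's, but the execution diverges in two places and one step has a genuine gap.

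In your injectivity argument you write ``from $g\cdot D - D = \al_2 - \Ad(g)\al_1$ one sees that $g\cdot D$ is close to $D$, so $g$ is close to $\cG(P)_D$.'' This is circular: $\Ad(g)\al_1$ is small only if $g$ is a priori bounded, and even granting $g\cdot D$ close to $D$, deducing that $g$ is close to $\cG(P)_D$ in an infinite-dimensional gauge group is exactly the nontrivial content that requires the elliptic estimates proved in the ``full version'' part of Theorem~\ref{thm:slice}. Once one invokes that part of the theorem it already yields $[g]$ small in $\cG(P)/Z(G)$, which is stronger than what you need and makes the factorisation $g=g_Dh$ redundant. The paper sidesteps the whole issue by citing Theorem~\ref{thm:slice} directly for injectivity.

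For continuity of $p^{-1}$ your appeal to ``continuous dependence of the solutions provided by the implicit function theorem'' is too terse to stand on its own: a convergent sequence $[D+\al_n]\to[D+\al]$ in the quotient topology only gives convergence of gauge-transformed representatives, not of the $\al_n$, so continuous dependence cannot be applied directly. The paper's argument is the openness one, and it is the right one here: choose $\cU\subset\cS(D)$ small enough that $D+\al$ remains reductive and simple for $\al\in\cU$; then the saturation $\cG(P)\cdot(D+\cU)$ is open in $\cA^\fla(P)$ (by the Coulomb-gauge existence part of Theorem~\ref{thm:slice}), and since the orbit $\cG(P)\cdot D$ is closed, this saturation descends to an open neighbourhood of $[D]$ in $\cM^\fla(P)$, which gives continuity of $p^{-1}$. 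You should replace your ``continuous dependence'' remark with this openness argument or an equivalent one.

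One thing you do that the paper glosses over: you explicitly verify that $\cG(P)_D$ preserves $\cS(D)$ by choosing the Hermitian structure on $\ad P$ to be $\cG(P)_D$-equivariant (by averaging over a maximal compact of the finite-dimensional reductive group $\cG(P)_D$), so that $\Ad(g)$ commutes with $D_0^\dagger$ and the Coulomb condition is preserved. This is a real, if minor, technical point; it is worth keeping.
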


\begin{proof}
By Theorem~\ref{thm:slice}, $p$ is well defined, continuous and one-to-one
near $[0]$.
For a sufficiently small neighbourhood $\cU$ of $0$ in $\cS(D)$ such that the
connection $D+\al$ is reductive and simple for any $\al\in\cU$, the preimage
of $p(\cU/\cG(P))\subset\cM^\fla(P)$ in $\cA^\fla(P)$ under the projection is
$\cG(P)\cdot\cU$.
This is an open subset and, since the orbit $\cG(P)\cdot D$ is closed, it
descends to an open neighbourhood of $[D]$ in $\cM^\fla(P)$.
So $p^{-1}$ is also continuous.
\end{proof}

\subsection{Smooth points on the moduli space}\label{sec:smooth-g}
A separate issue is the smoothness of $\cA^\fla(P)$ itself near a flat
connection $D$.
We will show that $H^2(M,\ad'P)$ is the {\em obstruction} to smoothness
from the gauge theoretic point of view.
The proof outlined below is similar to the proofs in \cite{Ki,Ko} for moduli
of holomorphic bundles and of Hermitian-Einstein connections on K\"ahler
manifolds.
However we are studying moduli of flat $G$-connections on a Riemannian
manifold.
With the Riemannian metric on $M$ and the Hermitian structure chosen above, the
Laplacian $\Box_i:=D_i^\dagger D_i+D_{i-1}D_{i-1}^\dagger$ and the associated
Green's operator $G_i$ preserve the decomposition
$\Om^i(M,\ad P)\cong\Om^i(M,Z(\fg))\oplus\Om^i(M,\ad'P)$ for $0\le i\le n$.
In particular, a harmonic form in $\Om^i(M,\ad P)$ projects to harmonic
forms in $\Om^i(M)\otimes Z(\fg)$ and $\Om^i(M,\ad'P)$.
We define the {\em Kuranishi map}
\[  \kappa\colon\Om^1(M,\ad P)\to\Om^1(M,\ad P),\qquad
    \al\mapsto\al+\tfrac12D_1^\dagger G_2[\al,\al].   \]

\begin{pro}\label{pro:Kur}
Let $M$ be a compact manifold and $G$ be a complex reductive Lie group.
Suppose $D$ is a flat connection on a $G$-bundle $P\to M$ such that
$H^2(M,\ad'P)=0$.
Then for $k>\frac{n}2+2$, there is a neighbourhood of $D$ in $\cA^\fla_k(P)$
and a neighbourhood of $0$ in $\ker D_1\subset\Om^1_k(M,\ad P) $ that are
diffeomorphic via the map $D+\al\mapsto\kappa(\al)$.
\end{pro}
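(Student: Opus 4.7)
The approach I have in mind is the classical Kuranishi strategy, adapted here to flat connections on a Riemannian manifold rather than holomorphic structures on a K\"ahler one.  The starting observation is that $\kappa(\al)=\al+Q(\al,\al)$ where the operator $Q(\al,\al):=\tfrac12 D_1^\dagger G_2[\al,\al]$ is quadratic in $\al$; hence the differential of $\kappa$ at $0$ is the identity.  For $k>\tfrac{n}{2}+2$, the Sobolev embedding $\Om^1_k\hookrightarrow C^2$ makes pointwise multiplication continuous on $\Om^1_k$, and elliptic regularity gives that $D_1^\dagger G_2:\Om^2_{k-1}\to\Om^1_k$ is bounded, so $\kappa$ is smooth as a map $\Om^1_k(M,\ad P)\to\Om^1_k(M,\ad P)$.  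The inverse function theorem then makes $\kappa$ a diffeomorphism between neighbourhoods of $0$; the task is to show that under this diffeomorphism the flatness condition $F(\al):=D_1\al+\tfrac12[\al,\al]=0$ is exactly the linear condition $D_1\beta=0$.

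For the forward direction, suppose $F(\al)=0$.  I would apply $D_1$ to $\kappa(\al)$ and substitute the Hodge identity $\id=\mathcal{H}_2+D_1D_1^\dagger G_2+D_2^\dagger D_2G_2$ on $\Om^2(M,\ad P)$, together with the commutation $D_2G_2=G_3D_2$ coming from $D_2\Box_2=\Box_3D_2$, to obtain
\[ D_1\kappa(\al)=D_1\al+\tfrac12\bigl([\al,\al]-\mathcal{H}_2[\al,\al]-D_2^\dagger G_3D_2[\al,\al]\bigr). \]
Since the Lie bracket annihilates $Z(\fg)$, the form $[\al,\al]$ takes values in $\ad'P$, and the hypothesis $H^2(M,\ad'P)=0$ forces $\mathcal{H}_2[\al,\al]=0$.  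Meanwhile the Leibniz rule and the graded Jacobi identity give $D_2[\al,\al]=2[D_1\al,\al]=-[[\al,\al],\al]=0$ whenever $F(\al)=0$.  Substituting, $D_1\kappa(\al)=D_1\al+\tfrac12[\al,\al]=0$.

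For the converse, suppose $\kappa(\al)\in\ker D_1$ with $\al$ small.  The same Hodge decomposition rearranges $D_1\kappa(\al)=0$ into
\[ F(\al)=\tfrac12\mathcal{H}_2[\al,\al]+\tfrac12 D_2^\dagger G_3D_2[\al,\al], \]
and the harmonic term vanishes again by $H^2(M,\ad'P)=0$.  The Leibniz rule now reads $D_2[\al,\al]=2[D_1\al,\al]=2[F(\al),\al]-[[\al,\al],\al]=2[F(\al),\al]$ via Jacobi, yielding the fixed-point identity $F(\al)=D_2^\dagger G_3[F(\al),\al]$.  Combining Sobolev multiplication $H^{k-1}\cdot H^k\hookrightarrow H^{k-1}$ (valid for $k-1>\tfrac{n}{2}$) with the elliptic bound $\|D_2^\dagger G_3\omega\|_{2,k}\le C\|\omega\|_{2,k-1}$ gives an estimate of the form $\|F(\al)\|_{2,k-1}\le C'\|\al\|_{2,k}\|F(\al)\|_{2,k-1}$.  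Shrinking the neighbourhood so that $C'\|\al\|_{2,k}<1$ forces $F(\al)=0$, so $D+\al$ is flat.

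The main technical obstacle is this last contraction estimate: the bookkeeping of Sobolev indices must close so that the nonlinear bracket term lands in a space from which $D_2^\dagger G_3$ recovers the regularity lost by $D_1$, which is precisely why the assumption $k>\tfrac{n}{2}+2$ is imposed.  Once both directions are in place, the inverse function theorem neighbourhood restricts to the desired diffeomorphism between a neighbourhood of $D$ in $\cA^\fla_k(P)$ and a neighbourhood of $0$ in $\ker D_1\subset\Om^1_k(M,\ad P)$, with elliptic bootstrap promoting smooth $\al$ to smooth $\kappa(\al)$ and conversely.
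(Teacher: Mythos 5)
Your proposal is correct and follows essentially the same Kuranishi argument as the paper: the identity differential at $0$ plus the inverse function theorem; the observation that $[\al,\al]$ is $\ad'P$-valued so its harmonic part dies under $H^2(M,\ad'P)=0$; and the fixed-point identity $F(\al)=D_2^\dagger G_3[F(\al),\al]$ (the paper writes $\gam=G_2 D_2^\dagger[\gam,\al]$, which is the same operator) leading to a contraction estimate that forces $F(\al)=0$ for small $\al$. The only cosmetic deviations are that you commute the Green's operator past $D_2$ to reach $G_3$ where the paper keeps $G_2$ and commutes it past $D_2^\dagger D_2$, and you derive $D_2[\al,\al]=0$ via the graded Jacobi identity where the paper uses $D_2D_1=0$; these change nothing in substance.
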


\begin{proof}
As the differential of $\kappa$ is the identity map at $0$, it is invertible
near $0$.
For $\al\in\Om^1(M,\ad P)$, we have $[\al,\al]\in\Om^2(M,\ad'P)$.
By the assumption $H^2(M,\ad'P)=0$, the Green's operator $G_2$ is the inverse
of the Laplacian $\Box_2=D_2^\dagger D_2+D_1D_1^\dagger$ on $\Om^2(M,\ad'P)$.
Therefore $\Box_2G_2[\al,\al]=G_2\Box_2[\al,\al]=[\al,\al]$ and
\[ D_1\kappa(\al)=D_1\al+\tfrac12D_1D_1^\dagger G_2[\al,\al]
   =D_1\al+\tfrac12[\al,\al]-\tfrac12G_2D_2^\dagger D_2[\al,\al]. \]
Here since $\Box_2$ commutes with $D_1D_1^\dagger$ and $D_2^\dagger D_2$,
so does $G_2$.
If $D_1\al+\tfrac12[\al,\al]=0$, then $-\tfrac12D_2[\al,\al]=D_2D_1\al=0$,
and so $D_1\kappa(\al)=0$.
Conversely, if $D_1\kappa(\al)=0$, we want to show that
$\gam:=D_1\al+\tfrac12[\al,\al]$ is zero.
Using
\[ \gam=\tfrac12G_2D_2^\dagger D_2[\al,\al]=G_2D_2^\dagger[D_1\al,\al]
=G_2D_2^\dagger[\gam,\al], \]
we get, for some constant $c_0>0$,
\[ \|\gam\|_{2,k-1}\le c_0\,\|\al\|_{2,k-2}\,\|\gam\|_{2,k-2}
   \le c_0\,\|\al\|_{2,k}\,\|\gam\|_{2,k-1}.\]
Therefore $\gam=0$ when $\|\al\|_{2,k}$ is sufficiently small.
\end{proof}

{}From the proof of Proposition~\ref{pro:Kur}, it is evident that the
result holds if we assume, instead of $H^2(M,\ad'P)=0$, that the harmonic
part of $[\al,\al]$ is zero for all $\al\in\Om^1(M,\ad P)$.
The condition itself depends on a metric on $M$, but it implies that the map
$H^1(M,\ad P)\to H^2(M,\ad'P)$, sending the cohomology class represented by
a closed $1$-form $\al\in H^1(M,\ad P)$ to the cohomology class of
$[\al,\al]$, is zero.

As in the slice theorem, Proposition~\ref{pro:Kur} remains valid if we
restrict to the spaces of smooth objects.
Let $\cA^\fla_\circ(P)$ be the set of reductive flat connections that
are good and satisfy the condition $H^2(M,\ad'P)=0$, and let
$\cM^\fla_\circ(P):=\cA^\fla_\circ(P)/\cG(P)$.
Combining Proposition~\ref{pro:Kur} and Corollary~\ref{cor:slice}, we
conclude that $\cM^\fla_\circ(P)$ is in the smooth part of the moduli
space $\cM^\fla(P)$.
However, there can be smooth points outside $\cM^\fla_\circ(P)$.
This can be seen from the examples of character varieties in the Appendix
(cf.~\S\ref{sec:luna}), as $\cM^\fla_\circ(P)$ coincides with its counterpart
in the character variety when $M$ is a surface (cf.~\S\ref{sec:compare}).

\begin{cor}\label{cor:mod}
Suppose $D\in\cA^\fla_\circ(P)$.
Then there is a neighbourhood of $[D]$ in $\cM^\fla_\circ(P)$ diffeomorphic
to a neighbourhood of $0$ in $H^1(M,\ad P)$.
In particular, $\dim_\bC\cM^\fla_\circ(P)=b_1(M,\ad P)$ is finite.
\end{cor}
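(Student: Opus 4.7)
The plan is to combine Proposition~\ref{pro:Kur} (which linearises $\cA^\fla(P)$ near $D$ onto $\ker D_1$) with Corollary~\ref{cor:slice} (which identifies $\cM^\fla(P)$ locally with the slice $\cS(D)$, since $D$ is good), and then use Hodge theory to cut $\ker D_1$ down to $H^1(M,\ad P)$. The key observation is that the Kuranishi map $\kappa(\al)=\al+\tfrac12 D_1^\dagger G_2[\al,\al]$ is compatible with the Coulomb gauge condition $D_0^\dagger\al=0$ defining $\cS(D)$.

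First, I would verify that the assumptions $\cA^\fla_\circ(P)$ are open. The condition that $D$ is simple is open (upper semicontinuity of stabiliser dimension, together with the fact that $H^0(M,\ad'P)$ is computed from an elliptic complex), reductivity is preserved since nearby orbits of good connections are closed, and the vanishing $H^2(M,\ad'P)=0$ is open by upper semicontinuity of the dimension of the kernel of the Laplacian $\Box_2$. Therefore a sufficiently small neighbourhood of $D$ in $\cA^\fla(P)$ actually lies in $\cA^\fla_\circ(P)$, so the quotient near $[D]$ lies in $\cM^\fla_\circ(P)$.

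Next, I would show $\kappa$ carries $\cS(D)$ diffeomorphically onto a neighbourhood of $0$ in the space of harmonic $1$-forms $\cH^1(M,\ad P)$. The calculation is immediate: since $D$ is flat, $D_0^\dagger D_1^\dagger=(D_1 D_0)^\dagger=0$, so for every $\al$,
\[ D_0^\dagger\kappa(\al)=D_0^\dagger\al+\tfrac12 D_0^\dagger D_1^\dagger G_2[\al,\al]=D_0^\dagger\al. \]
Combining with Proposition~\ref{pro:Kur}, if $\al\in\cS(D)$ (small) then $\kappa(\al)\in\ker D_1\cap\ker D_0^\dagger=\cH^1(M,\ad P)$; conversely for $\beta\in\cH^1(M,\ad P)$ small, the preimage $\al:=\kappa^{-1}(\beta)$ satisfies $D_1\al+\tfrac12[\al,\al]=0$ (since $\kappa(\al)\in\ker D_1$ corresponds to a flat connection) and $D_0^\dagger\al=D_0^\dagger\kappa(\al)=0$, so $\al\in\cS(D)$. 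Hodge theory identifies $\cH^1(M,\ad P)\cong H^1(M,\ad P)$.

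Finally, since $D$ is good, Corollary~\ref{cor:slice} gives a homeomorphism from a neighbourhood of $0$ in $\cS(D)$ onto a neighbourhood of $[D]$ in $\cM^\fla(P)$, and as argued above this neighbourhood lies in $\cM^\fla_\circ(P)$. Composing with $\kappa|_{\cS(D)}$ yields the claimed diffeomorphism onto a neighbourhood of $0$ in $H^1(M,\ad P)$. Finiteness of $b_1(M,\ad P)$ follows since $H^1$ is the middle cohomology of the elliptic complex \eqref{eqn:cplx} on a compact manifold. The main technical point, rather than an obstacle, is the commutation $D_0^\dagger D_1^\dagger=0$ together with openness of the defining conditions of $\cA^\fla_\circ(P)$; both are direct once Proposition~\ref{pro:Kur} and Corollary~\ref{cor:slice} are in hand.
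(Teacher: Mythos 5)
Your proof is correct and is essentially the expansion of what the paper leaves implicit: Corollary~\ref{cor:mod} is stated without a separate proof, following the remark that it results from combining Proposition~\ref{pro:Kur} with Corollary~\ref{cor:slice}. The central computation you supply — that $D_0^\dagger\kappa(\al)=D_0^\dagger\al$ because $D_0^\dagger D_1^\dagger=(D_1D_0)^\dagger=0$ for a flat $D$ — is exactly the point needed to see that $\kappa$ is compatible with the Coulomb gauge condition, so that it restricts to a chart carrying a neighbourhood of $0$ in $\cS(D)$ onto a neighbourhood of $0$ in $\cH^1(M,\ad P)\cong H^1(M,\ad P)$; composed with the homeomorphism of Corollary~\ref{cor:slice} this yields the local model. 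Your preliminary check that the defining conditions of $\cA^\fla_\circ(P)$ are open (so that the slice neighbourhood actually sits inside $\cM^\fla_\circ(P)$) is a sensible point of care that the paper glosses over, although the phrasing for reductivity (``nearby orbits of good connections are closed'') is a little circular and deserves a cleaner justification, e.g.\ via the stability of the closed-orbit condition under small perturbations established by the slice theorem itself. In all, same approach as the paper, with useful details made explicit.
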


When $M$ is a compact orientable surface (i.e., $n=2$), the two cohomology
groups $H^0(M,\ad'P)$ and $H^2(M,\ad'P)$ are dual spaces of each other by
Lemma~\ref{lem:dual}.
So a good flat connection represents a smooth point in the moduli space.
Moreover, the dimension of $\cM^\fla_\circ(P)$ can be computed by an index
formula.

\begin{cor}\label{cor:surf}
Suppose $M$ is a compact orientable surface of genus $g>1$ and
$P\to M$ is a flat principal $G$-bundle.
Then for any good flat $D\in\cA^\fla(P)$, we have $H^2(M,\ad'P)=0$, and
there is a neighbourhood of $[D]$ in $\cM^\fla_\circ(P)$ diffeomorphic to
a neighbourhood of $0$ in $H^1(M,\ad P)$.
Moreover,
\[  \dim_\bC\cM^\fla_\circ(P)=(2g-2)\dim_\bC G+2\dim_\bC Z(\fg). \]
\end{cor}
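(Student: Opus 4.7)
The plan is to reduce the statement to Corollary~\ref{cor:mod} by showing $D\in\cA^\fla_\circ(P)$, and then compute the dimension from the index formula.

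First I would verify the vanishing $H^2(M,\ad'P)=0$. Since $D$ is good it is, by definition, reductive with stabiliser $\cG(P)_D=Z(G)$. The discussion preceding the slice theorem shows that having the stabiliser of the same dimension as $Z(G)$ forces $H^0(M,\ad'P)=0$, i.e.\ $D$ is simple. On a compact orientable surface, Poincar\'e duality for flat coefficients (Lemma~\ref{lem:dual} applied to $\ad'P$) yields
\[ H^2(M,\ad'P)\cong H^0(M,\ad'P)^*=0. \]
Hence $D\in\cA^\fla_\circ(P)$, and Corollary~\ref{cor:mod} immediately delivers the local diffeomorphism between a neighbourhood of $[D]$ in $\cM^\fla_\circ(P)$ and a neighbourhood of $0$ in $H^1(M,\ad P)$.

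Next I would compute $b_1(M,\ad P)$ via the Euler characteristic of the elliptic complex \eqref{eqn:cplx}. The index formula quoted in the preliminaries gives
\[ \chi(M,\ad P)=\chi(M)\dim_\bC G=(2-2g)\dim_\bC G. \]
Since $D$ is good, $H^0(M,\ad P)\cong Z(\fg)$, so $b_0(M,\ad P)=\dim_\bC Z(\fg)$. The decomposition $\ad P=Z(\fg)_M\oplus\ad'P$ combined with $H^2(M,\ad'P)=0$ and $b_2(M)=1$ for a closed orientable surface gives $b_2(M,\ad P)=\dim_\bC Z(\fg)$. Solving $\chi=b_0-b_1+b_2$ then produces
\[ b_1(M,\ad P)=2\dim_\bC Z(\fg)-(2-2g)\dim_\bC G=(2g-2)\dim_\bC G+2\dim_\bC Z(\fg), \]
which is the asserted dimension.

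There is no serious obstacle here: the content lies entirely in previous results. The only delicate point is to invoke Poincar\'e duality for the semisimple sub-bundle $\ad'P$ rather than $\ad P$, which is precisely the last sentence of Lemma~\ref{lem:dual}; this is what isolates the vanishing of $H^2(M,\ad'P)$ without being obscured by the central contribution $H^2(M,Z(\fg))$.
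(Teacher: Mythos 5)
Your proof is correct and follows essentially the same route as the paper: Poincar\'e duality for $\ad'P$ (Lemma~\ref{lem:dual}) together with simplicity of a good connection gives $H^2(M,\ad'P)=0$, hence $D\in\cA^\fla_\circ(P)$, and the dimension then falls out of the Euler characteristic of the elliptic complex. The only cosmetic difference is in the bookkeeping of the dimension count: you solve $\chi=b_0-b_1+b_2$ for the full bundle $\ad P$ using $b_0(M,\ad P)=b_2(M,\ad P)=\dim_\bC Z(\fg)$, whereas the paper first splits $b_1(M,\ad P)=b_1(M,\ad'P)+b_1(M)\dim_\bC Z(\fg)$ and uses $b_1(M,\ad'P)=-\chi(M,\ad'P)$; both give the same answer, and yours is arguably a bit more transparent since it makes the role of $H^0$ and $H^2$ explicit.
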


\begin{proof}
By Corollary~\ref{cor:mod}, the dimension of the moduli space is
\begin{eqnarray*}
&b_1(M,\ad P)=b_1(M,\ad'P)+b_1(M)\dim_\bC Z(\fg)
 =-\chi(M,\ad'P)+2g\dim_\bC Z(\fg) \\
&\quad=(2g-2)\dim_\bC\fg'+2g\dim_\bC Z(\fg)
=(2g-2)\dim_\bC\fg+2\dim_\bC Z(\fg).
\end{eqnarray*}
\vspace{-1cm}

\end{proof}

In this case ($n=2$), the pairing \eqref{eqn:Poincare} is a (complex)
symplectic structure on $H^1(M,\ad P)$.
In fact, the symplectic reduction procedure of \cite{AB} yields a holomorphic
symplectic form $\upom$ on $\cM^\fla_\circ(P)$ which restricts to the above
one on the tangent space at each $[D]\in\cM^\fla_\circ(P)$.
More generally, if $\dim M=n$ is even and $M$ is a compact K\"ahler manifold
with a K\"ahler form $\om$, then a symplectic form $\upom$ on
$\cM^\fla_\circ(P)$, or $\upom_{[D]}$ on $H^1(M,\ad P)$, is defined by
\begin{equation}\label{eqn:omD}
\upom_{[D]}([\al],[\beta])=\int_M(\al,\wedge\beta)\wedge
   \frac{\om^{\frac{n}2-1}}{\big(\frac{n}2-1\big)!},
\end{equation}
where $\al,\beta\in\Om^1(M,\ad P)$ are closed.

\subsection{Non-orientable manifolds}\label{sec:non-g}
Now suppose $M$ is a compact, non-orientable manifold of dimension $n$.
Let $\pi\colon\tM\to M$ be the orientable double cover.
The non-trivial deck transformation $\tau\colon\tM\to\tM$ is an involution
on $\tM$.
Given a principal bundle $P\to M$ with complex reductive structure group $G$,
let $\tP:=\pi^*P$ denote the pullback to $\tM$.
Then there is a lift of the involution $\tau$ to $\tP$, still denoted by
$\tau$, such that $\tP/\tau=P$, and the map $\pi^*$ pulls back forms,
connections and gauge transformation from $M$ to $\tM$.
The pullback map $\tau^*$ acts as an involution on $\Om^i(\tM,\ad\tP)$,
$\cA(\tP)$, $\cG(\tP)$, and the $\tau^*$-invariant subspaces can be identified
with the corresponding spaces from the bundle $P\to M$ \cite{HL}, i.e.,
we have the following isomorphisms via $\pi^*$:
\[  \Om^i(M,\ad P)\cong\Om^i(\tM,\ad\tP)^\tau,\quad
\cA(P)\cong\cA(\tP)^\tau,\quad\cG(P)\cong\cG(\tP)^\tau. \]
Suppose a connection $D$ on $P$ pulls back to $\tD$ on $\tP$.
Then $\tD$ is flat if and only if $D$ is so, and
$\cA^\fla(P)\cong\cA^\fla(\tP)^\tau$.
The covariant differentials $D_i\colon\Om^i(M,\ad P)\to\Om^{i+1}(M,\ad P)$ and
$\tD_i\colon\Om^i(\tM,\ad\tP)\to\Om^{i+1}(\tM,\ad\tP)$ satisfy
\[ \pi^*\circ D_i=\tD_i\circ\pi^*,\qquad\tau^*\circ\tD_i=\tD_i\circ\tau^*.\]
Therefore $\tau^*$ acts as an involution on $H^i(\tM,\ad\tP)$ for
$0\le i\le n$.
Let
\[  H^i(\tM,\ad\tP)=H^i(\tM,\ad\tP)^\tau\oplus H^i(\tM,\ad\tP)^{-\tau}  \]
be the decomposition such that $\tau^*=\pm1$ on the subspaces
$H^i(\tM,\ad\tP)^{\pm\tau}$, respectively.
Set $b_i^\pm(\tM,\ad\tP):=\dim_\bC H^i(\tM,\ad\tP)^{\pm\tau}$.
We have similar decompositions for $H^i(\tM,\bC)$, $H^i(\tM,\ad'\tP)$,
and $b_i^\pm(\tM)=\dim_\bC H^i(\tM,\bC)^{\pm\tau}$,
$b_i^\pm(\tM,\ad'\tP)=\dim_\bC H^i(\tM,\ad'\tP)^{\pm\tau}$.

\begin{lem}\label{lem:tau}
Suppose a flat connection $D$ on $P\to M$ and it lifts to a flat connection
$\tD$ on $\tP\to\tM$.\\
1. There are non-degenerate bilinear pairings
\begin{equation}\label{eqn:pairing+-}
H^i(\tM,\ad\tP)^{\pm\tau}\times H^{n-i}(\tM,\ad\tP)^{\mp\tau}\to\bC.
\end{equation}
Hence $(H^i(\tM,\ad\tP)^{\pm\tau})^*\cong H^{n-i}(\tM,\ad\tP)^{\mp\tau}$,
$b_i^\pm(\tM,\ad\tP)=b_{n-i}^\mp(\tM,\ad\tP)$.\\
2. There are isomorphisms $H^i(\tM,\ad\tP)^\tau\cong H^i(M,\ad P)$,
$H^i(\tM,\ad\tP)^{-\tau}\cong H^{n-i}(M,\ad P)^*$.
Hence
\[ b_i^+(\tM,\ad\tP)=b_i(M,\ad P),\quad b_i^-(\tM,\ad\tP)=b_{n-i}(M,\ad P),
   \quad b_i(\tM,\ad\tP)=b_i(M,\ad P)+b_{n-i}(M,\ad P).  \]
The same results hold for $H^i(\tM,\bC)$ and $H^i(\tM,\ad'\tP)$.
\end{lem}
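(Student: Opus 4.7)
The strategy is to leverage the Poincar\'e duality from Lemma~\ref{lem:dual} on the orientable double cover $\tM$, and then track how the involution $\tau$ acts on the pairing and on cohomology. The starting observation is that since $M$ is non-orientable, the deck transformation $\tau$ on $\tM$ is orientation-reversing. The bilinear form on $\fg$ is $\Ad$-invariant and the lift of $\tau$ to $\tP$ commutes with the $G$-action, so $(\tau^*\al,\tau^*\beta)=\tau^*(\al,\beta)$ as an $n$-form on $\tM$. Integrating and using orientation-reversal gives
\[
\bra\tau^*\al,\tau^*\beta\ket \;=\; -\bra\al,\beta\ket.
\]
Hence if $\al$ lies in the $\eps_\al\tau$-eigenspace and $\beta$ in the $\eps_\beta\tau$-eigenspace with $\eps_\al,\eps_\beta\in\{\pm1\}$, then $\bra\al,\beta\ket=-\eps_\al\eps_\beta\bra\al,\beta\ket$, which is forced to vanish unless the signs are opposite. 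Since the full Poincar\'e pairing \eqref{eqn:Poincare} is non-degenerate and the cohomology splits as a direct sum of the two eigenspaces, it must restrict non-degenerately to $H^i(\tM,\ad\tP)^{\pm\tau}\times H^{n-i}(\tM,\ad\tP)^{\mp\tau}\to\bC$, which is \eqref{eqn:pairing+-}, and the duality of eigenspaces and Betti identity in Part~1 follow.

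For Part~2, I would first note that $\pi^*$ identifies $\Om^i(M,\ad P)$ with $\Om^i(\tM,\ad\tP)^\tau$ and commutes with the differentials via $\pi^*\circ D_i=\tD_i\circ\pi^*$, so it descends to an isomorphism $H^i(M,\ad P)\cong H^i(\tM,\ad\tP)^\tau$. Combining this with the duality just established gives
\[
H^i(\tM,\ad\tP)^{-\tau} \;\cong\; \bigl(H^{n-i}(\tM,\ad\tP)^{+\tau}\bigr)^{\!*} \;\cong\; H^{n-i}(M,\ad P)^*.
\]
The three Betti-number identities then follow by taking dimensions and using $b_i(\tM,\ad\tP)=b_i^+(\tM,\ad\tP)+b_i^-(\tM,\ad\tP)$.

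The same arguments transfer verbatim to $H^i(\tM,\bC)$ and $H^i(\tM,\ad'\tP)$: for the trivial coefficient bundle one uses ordinary Poincar\'e duality on $\tM$, while for $\ad'\tP$ one invokes that the bilinear form on $\fg$ is orthogonal under the splitting $\fg=Z(\fg)\oplus\fg'$ and so restricts non-degenerately to $\fg'$, with $\tau$ respecting this subbundle. The main point that requires care is the orientation-reversal sign, since this is exactly what swaps the roles of the $+\tau$- and $-\tau$-eigenspaces in the duality pairing; once that sign is pinned down, the remainder is a straightforward eigenspace decomposition of an already non-degenerate pairing together with the transfer isomorphism $\pi^*$.
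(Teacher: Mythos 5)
Your proof is correct and follows essentially the same route as the paper: the orientation-reversal sign $\bra\tau^*\al,\tau^*\beta\ket=-\bra\al,\beta\ket$ forces the non-degenerate Poincar\'e pairing to split across opposite $\tau$-eigenspaces, and the cochain-level isomorphism $\pi^*\colon\Om^\bullet(M,\ad P)\to\Om^\bullet(\tM,\ad\tP)^\tau$ gives the identification $H^i(\tM,\ad\tP)^\tau\cong H^i(M,\ad P)$ from which the $-\tau$-eigenspace description and Betti identities follow. Your eigenspace vanishing argument is a slightly more explicit spelling-out of the paper's ``splits into two,'' but the underlying approach is the same.
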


\begin{proof}
1. Since $\tau$ reverses the orientation on $\tM$, we have, for all
$[\al]\in H^i(\tM,\ad\tP)$, $[\beta]\in H^{n-i}(\tM,\ad\tP)$,
\[ \bra\tau^*[\al],\tau^*[\beta]\ket=-\bra[\al],[\beta]\ket.  \]
Therefore, the non-degenerate pairing $\bra\cdot,\cdot\ket$ for
$H^\bullet(\tM,\ad\tP)$ splits into two in \eqref{eqn:pairing+-}.\\
2. The first isomorphism is because of the isomorphism
$\pi^*\colon\Om^\bullet(M,\ad P)\to\Om^\bullet(\tM,\ad\tP)^\tau$ of
cochain complexes.
Then $H^i(\tM,\ad\tP)^{-\tau}\cong(H^{n-i}(\tM,\ad\tP)^\tau)^*\cong
H^{n-i}(M,\ad P)^*$.
The rest follows easily.
\end{proof}

Since $\chi(\tM,\ad\tP)=\chi(\tM)\dim_\bC G$, $\chi(M,\ad P)=\chi(M)\dim_\bC G$
and $\chi(M)=\frac12\chi(\tM)$, we get $\chi(M,\ad P)=\frac12\chi(\tM,\ad\tP)$;
both sides vanish if $\dim M=n$ is odd.
On the other hand, the Lefschetz number of $\tau$ is the supertrace of $\tau^*$
on $H^\bullet(\tM,\ad\tP)$, i.e.,
\[ L(\tau,\ad\tP):=\sum_{i=0}^n(-1)^i\tr(\tau^*|H^i(\tM,\ad\tP)). \]
In our case, since $\tau$ acts on $\tM$ without fixed points, we get
$L(\tau,\ad\tP)=0$ regardless of whether $n$ is even or odd.
These statements are consistent with Lemma~\ref{lem:tau}.

It can be shown that a flat connection $D$ is reductive if and only if the
pullback $\tD$ is so \cite{HWW}.
On the other hand, $D$ is simple if $b_0(M,\ad'P)=0$, whereas $\tD$ is simple
if $b_0(\tM,\ad\tP)=0$.
Clearly, $D$ is simple if $\tD$ is so, but the converse is not true
(Example~\ref{eg:simple}).
Similarly, $D$ is good if $\tD$ is so, but the converse is not true either
(Example~\ref{eg:nonCI}).
In addition to the requirement in the orientable case that the flat connection
is good, the smoothness of $\cM^\fla(P)$ at $[D]$ requires further that
$H^2(M,\ad'P)=0$, whereas that of $\cM^\fla(\tP)$ at $[\tD]$ requires
$H^2(\tM,\ad'\tP)=0$.
By Lemma~\ref{lem:tau}, the vanishing of $H^2(\tM,\ad'\tP)$ implies that of
$H^2(M,\ad'P)$, but the converse is not true (Example~\ref{eg:h2}).
We refer the reader to the Appendix for various examples.

\begin{pro}\label{pro:non-r}
Suppose a flat connection $D$ on $P\to M$ lifts to a good flat connection
$\tD$ on $\tP\to\tM$.
Then\\
1. $\pi^*\colon\cM^\fla(P)\to\cM^\fla(\tP)^\tau$ is a homeomorphism from a
neighbourhood of $[D]$ to a neighbourhood of $[\tD]$.\\
2. if furthermore $H^2(\tM,\ad'P)=0$, the above local homeomorphism is a local
diffeomorphism, and
$\dim_\bC\cM^\fla_\circ(P)=b_1^+(\tM,\ad\tP)=b_{n-1}^-(\tM,\ad\tP)$.
\end{pro}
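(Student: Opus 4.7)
The plan is to lift the slice-theoretic analysis of \S\ref{sec:smooth-g} to the orientation double cover $\tP\to\tM$ and then descend to $\tau^*$-fixed points, using the identifications $\cA^\fla(P)\cong\cA^\fla(\tP)^\tau$ and $\cG(P)\cong\cG(\tP)^\tau$ recorded in \S\ref{sec:non-g}. Both parts will then reduce to ``taking $\tau^*$-invariants'' in statements already proved for $\tP\to\tM$.

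For part 1, I would apply Corollary~\ref{cor:slice} to the good flat connection $\tD$ on $\tP$, obtaining a homeomorphism from a neighbourhood of $0\in\cS(\tD)$ onto a neighbourhood of $[\tD]$ in $\cM^\fla(\tP)$. The essential point is that $\tau^*$ preserves $\cS(\tD)$: the pulled-back Riemannian metric on $\tM$ and Hermitian structure on $\ad\tP$ are $\tau$-invariant by construction, so the formal adjoints $\tD_i^\dagger$, the flatness equation $\tD_1\al+\tfrac12[\al,\al]=0$, and the gauge-fixing equation $\tD_0^\dagger\al=0$ cutting out the slice all commute with $\tau^*$. Hence $\cS(\tD)^\tau=\pi^*\cS(D)$, and restricting the slice homeomorphism to $\tau^*$-fixed points identifies a neighbourhood of $0\in\cS(D)$ with a neighbourhood of $[\tD]$ in $\cM^\fla(\tP)^\tau$. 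Since $\tD$ good forces $D$ good (any element of $\cG(P)_D\subset\cG(\tP)_{\tD}=Z(G)$ lies in $Z(G)$, and $Z(G)$ is always contained in $\cG(P)_D$), a second application of Corollary~\ref{cor:slice} applied directly to $D$ on $P$ identifies the same neighbourhood of $0\in\cS(D)$ with a neighbourhood of $[D]$ in $\cM^\fla(P)$. Tracing $\al\in\cS(D)$ through the composition produces $[\pi^*(D+\al)]=\pi^*[D+\al]$, so the resulting homeomorphism is exactly $\pi^*$.

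For part 2, the hypothesis $H^2(\tM,\ad'\tP)=0$ lets me invoke Proposition~\ref{pro:Kur} for $\tD$; the Kuranishi map $\kappa(\al)=\al+\tfrac12\tD_1^\dagger G_2[\al,\al]$ is $\tau^*$-equivariant because $\tD_1^\dagger$, $G_2$ and the bracket all commute with $\tau^*$. Combined with the slice identification above, this upgrades the homeomorphism of part 1 to a diffeomorphism of smooth manifolds. Since $H^2(\tM,\ad'\tP)=0$ also implies $H^2(M,\ad'P)=0$ via Lemma~\ref{lem:tau}(2), Corollary~\ref{cor:mod} applies to $D$ on $P$ and yields $\dim_\bC\cM^\fla_\circ(P)=b_1(M,\ad P)$. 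The identity $b_1(M,\ad P)=b_1^+(\tM,\ad\tP)$ is Lemma~\ref{lem:tau}(2), and $b_1^+(\tM,\ad\tP)=b_{n-1}^-(\tM,\ad\tP)$ follows from Lemma~\ref{lem:tau}(1) with $i=1$.

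The main obstacle is the careful bookkeeping of $\tau$-equivariance in the implicit function theorem step of Theorem~\ref{thm:slice}. The operator $-\tD_0^\dagger\tD_0$ whose invertibility drives that argument commutes with $\tau^*$ and hence preserves the splitting $(\ker\tD_0)^\perp_{k+1}=((\ker\tD_0)^\perp_{k+1})^\tau\oplus((\ker\tD_0)^\perp_{k+1})^{-\tau}$, so when the input $\al$ is $\tau^*$-invariant the unique solution $u$ lies in the $\tau^*$-invariant summand and the gauge-fixed representative $\beta=e^u\cdot(D+\al)-D$ stays in $\Om^1(\tM,\ad\tP)^\tau\cong\Om^1(M,\ad P)$. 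An analogous uniqueness argument controls the Kuranishi inverse and verifies that a gauge transformation in $\cG(\tP)$ relating two $\tau^*$-invariant flat connections can be chosen $\tau^*$-invariant modulo $Z(G)$; together these ensure that ``$\tau^*$-invariants of the local model for $\tP$'' genuinely realise the local model on the $P\to M$ side.
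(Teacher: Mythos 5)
Your proposal is correct and mirrors the paper's own proof: restrict the slice homeomorphism of Corollary~\ref{cor:slice} applied to $\tD$ to the $\tau^*$-fixed locus $\cS(\tD)^\tau\cong\cS(D)$, compare with a direct application of Corollary~\ref{cor:slice} to $D$ (after observing $\tD$ good $\Rightarrow$ $D$ good), and for part~2 combine Corollary~\ref{cor:mod} with Lemma~\ref{lem:tau}. The only difference is that you spell out the $\tau^*$-equivariance of the implicit function theorem step and of the Kuranishi map in more detail than the paper, which simply asserts the slice map is $\tau$-equivariant; this is a harmless and reasonable elaboration of the same argument.
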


\begin{proof}
1. There is an induced $\tau$-action on $\cS(\tD)$ such that $\cS(\tD)^\tau$
can be identified (via $\pi^*$) with $\cS(D)$.
The map $\cS(\tD)\to\cA^\fla(\tP)/\cG(\tP)$ in Corollary~\ref{cor:slice} is
$\tau$-equivariant.
Choose a sufficiently small $\tau$-invariant neighbourhood $\tilde\cV$ of
$0\in\cS(\tD)$.
Then $\tilde\cV^\tau$ is a neighbourhood of $0\in\cS(\tP)^\tau$ that is
homeomorphic to a neighbourhood of $[\tD]\in\cM^\fla(P)^\tau$.
On the other hand, $\tilde\cV^\tau$ can be identified (via $\pi^*$) with a
neighbourhood $\cV$ of $0\in\cS(D)$ and is homeomorphic to a neighbourhood of
$[D]\in\cM^\fla(P)$.\\
2. This follows from Corollary~\ref{cor:mod} and Lemma~2.7.2.
\end{proof}

\begin{cor}\label{cor:nbhd}
Suppose $P$ is a $G$-bundle over a compact non-orientable surface $M$
homeomorphic to the connected sum of $h>2$ copies of $\bR\mathrm{P}^2$ and
suppose $D$ is a flat connection on $P$ whose pullback $\tD$ to $\tP\to\tM$
is good.
Then there is a neighbourhood of $[D]$ in $\cM^\fla_\circ(P)$ diffeomorphic
to a neighbourhood of $\tD$ in $\cM^\fla_\circ(\tP)^\tau$.
Moreover,
\[  \dim_\bC\cM^\fla_\circ(P)=\tfrac12\dim_\bC\cM^\fla_\circ(\tP)=
    (h-2)\dim_\bC G+\dim_\bC Z(G).    \]
\end{cor}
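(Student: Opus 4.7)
The plan is to reduce everything to Corollary~\ref{cor:surf} and Proposition~\ref{pro:non-r} via a topological computation of the double cover. First I would identify the orientable double cover: since $M\cong\#_h\bR\mathrm{P}^2$ has Euler characteristic $\chi(M)=2-h$, the orientable cover $\tM$ has $\chi(\tM)=2\chi(M)=4-2h$, hence is a compact orientable surface of genus $\tilde g=h-1$. The hypothesis $h>2$ gives $\tilde g>1$, so Corollary~\ref{cor:surf} applies to $\tP\to\tM$: since $\tD$ is good, we obtain $H^2(\tM,\ad'\tP)=0$ and
\[ \dim_\bC\cM^\fla_\circ(\tP)=(2\tilde g-2)\dim_\bC G+2\dim_\bC Z(\fg)=(2h-4)\dim_\bC G+2\dim_\bC Z(\fg). \]

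Next, since the vanishing $H^2(\tM,\ad'\tP)=0$ is precisely the extra hypothesis needed in Proposition~\ref{pro:non-r}(2), that proposition yields immediately the local diffeomorphism between a neighbourhood of $[D]$ in $\cM^\fla_\circ(P)$ and a neighbourhood of $[\tD]$ in $\cM^\fla_\circ(\tP)^\tau$, together with the identification of dimensions $\dim_\bC\cM^\fla_\circ(P)=b_1^+(\tM,\ad\tP)$.

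The remaining task is the halving of dimensions. Here I would invoke Lemma~\ref{lem:tau}(2) with $n=2$ and $i=1$: this gives $b_1^+(\tM,\ad\tP)=b_1(M,\ad P)$ and $b_1^-(\tM,\ad\tP)=b_{n-1}(M,\ad P)=b_1(M,\ad P)$, so $b_1^+(\tM,\ad\tP)=b_1^-(\tM,\ad\tP)=\tfrac12 b_1(\tM,\ad\tP)$. Combining this with Corollary~\ref{cor:mod} applied to $\tP$ gives
\[ \dim_\bC\cM^\fla_\circ(P)=b_1^+(\tM,\ad\tP)=\tfrac12 b_1(\tM,\ad\tP)=\tfrac12\dim_\bC\cM^\fla_\circ(\tP)=(h-2)\dim_\bC G+\dim_\bC Z(\fg), \]
using $\dim_\bC Z(G)=\dim_\bC Z(\fg)$ in the last step.

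There is no genuine obstacle: the argument is a bookkeeping assembly of Corollary~\ref{cor:surf}, Proposition~\ref{pro:non-r}, and Lemma~\ref{lem:tau}. The only place where one must be slightly careful is verifying that the hypothesis $H^2(\tM,\ad'\tP)=0$ of Proposition~\ref{pro:non-r}(2) holds, which is exactly what the $\tilde g>1$ clause of Corollary~\ref{cor:surf} delivers, and this is why the statement requires $h>2$ rather than merely $h\geq 1$.
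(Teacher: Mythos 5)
Your proposal is correct and takes essentially the same route as the paper: identify $\tM$ as the orientable genus-$(h-1)$ surface, invoke Corollary~\ref{cor:surf} (using $h>2$ so that $\tilde g>1$) together with Proposition~\ref{pro:non-r}(2), and derive the halving of dimensions from the equal $\pm\tau$-splitting of $H^1(\tM,\ad'\tP)$ furnished by Lemma~\ref{lem:tau}. The paper's proof is more compressed (it simply records $\dim_\bC H^1(\tM,\ad'\tP)^\tau=\dim_\bC H^1(\tM,\ad'\tP)^{-\tau}=\tfrac12\dim_\bC H^1(\tM,\ad'\tP)$ and cites the two ingredients), but the content and the chain of references are the same; your remark making explicit why $h>2$ is needed is a harmless elaboration of what the paper leaves implicit.
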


\begin{proof}
Since $\tM$ is an orientable surface of genus $h-1$, we have, by
Lemma~\ref{lem:tau},
\[  \dim_\bC H^1(\tM,\ad'\tP)^\tau=\dim_\bC H^1(\tM,\ad'\tP)^{-\tau}
    =\tfrac12\dim_\bC H^1(\tM,\ad'\tP).   \]
The results then follow easily from Proposition~\ref{pro:non-r}.2 and
Corollary~\ref{cor:surf}.
\end{proof}

Finally, if $\tM$ is K\"ahler, then the action of $\tau$ on
$\cM^\fla_\circ(\tP)$ is anti-symplectic with respect to $\upom$
in \eqref{eqn:omD}.
Therefore, $\cM^\fla_\circ(\tP)^\tau$ is an isotropic submanifold in
$\cM^\fla_\circ(\tP)$.
If $\dim M=2$, then $\cM^\fla_\circ(\tP)^\tau$ is a Lagrangian submanifold
\cite{HL}.

\section{Algebraic approach to character varieties}\label{sec:rep}
\subsection{Smooth points on the homomorphism space}\label{sec:smooth-r}
We assume that $\PI$ is a finitely generated group, i.e., $\PI=F/N$, where
$F=\bra X\ket$ is the free group on a finite set $X:=\{x_1,\dots,x_d\}$ and
$N$ is a normal subgroup in $F$.
An element $w\in F$ is a word in $X$, i.e., $w=\prod_{k=1}^lx_{i_k}^{m_k}$,
where $m_k\in\bZ\backslash\{0\}$ ($k=1,\dots,l$).
We also assume that $\PI$ is finitely presented, that is, in addition, $N$ is
the normal closure in $F$ of a finite set $R:=\{r_1,\dots,r_q\}\subset N$.
Each $r_j$ ($j=1,\dots,q$) is called a {\em relator}, and an element of $N$
is of the form $\prod_{k=1}^ms_kr_{j_k}^{n_k}s_k^{-1}$, where $s_k\in F$,
$1\le j_k\le q$, $n_k\in\bZ\backslash\{0\}$ ($k=1,\dots,m$).
Given a connected Lie group $G$, we have $\Hom(F,G)=G^X$ (the set of maps from
$X$ to $G$) since any homomorphism $\phi\colon F\to G$ is determined by its
values on the generators, $(\phi(x_i))_{i=1,\dots,d}\in G^X$.
Each word $w=\prod_{k=1}^lx_{i_k}^{m_k}\in F$ defines a map
$\tilde w\colon G^X\to G$,
$(g_i)_{i=1,\dots,d}\mapsto\prod_{k=1}^lg_{i_k}^{m_k}$.
In particular, we have the maps $\tilde r_j\colon G^X\to G$ ($j=1,\dots,q$)
from the relators, and they form a single map
$\br=(\tilde r_j)_{j=1,\dots,q}\colon G^X\to G^R$.
The space $\Hom(\PI,G)$ can be identified with the subset
$\br^{-1}(e,\dots,e)=\bigcap_{j=1}^q\tilde r_j^{-1}(e)$ of $G^X$.
We want to find a sufficient condition on $\phi\in\Hom(\PI,G)$ so that
the space $\Hom(\PI,G)$ is smooth at $\phi$.

Let $\fg$ be the Lie algebra of $G$.
Composition of $\phi\in\Hom(F,G)$ with the adjoint representation of $G$ on
$\fg$ makes $\fg$ a $\bZ F$-module, which we denote by $\fg_\Adp$.
Recall that a $1$-cocycles on $F$ with coefficients in $\fg_\Adp$ is a map
$\gam\colon F\to\fg$ such that $\gam(uv)=\gam(u)+\Ad_{\phi(u)}\gam(v)$
for all $u,v\in F$.
The space $Z^1(F,\fg_\Adp)$ of these $1$-cocycles can be identified with
$\fg^X$ as each $1$-cocycle is determined by its values on the generators
$x_1,\dots,x_d$.
On the other hand, by the left multiplication of $G$ on $G$, the tangent space
of $\Hom(F,G)=G^X$ at any point $\phi$ is also identified with $\fg^X$.
If $\gam=(\gam_i)_{i=1,\dots,d}\in\fg^X=T_\phi\Hom(F,G)$, let
$\tgam\in Z^1(F,\fg_\Adp)$ be the corresponding $1$-cocycle
satisfying $\tgam(x_i)=\gam_i$ ($i=1,\dots,d$).
Then for any $\phi\in\Hom(F,G)$ and $w\in F$, $\gam\in\fg^X$, we have \cite{We}
(see also \cite[\S VI]{Ra})
\begin{equation}\label{eqn:diff}
(d\tilde w)_\phi(\gam)=\tgam(w).
\end{equation}

There is an action of $\PI$ on $\Hom(N,\fg_\Adp)$ because $F$ acts on $N$ by
conjugation and on $\fg$ by $\Adp$, whereas $N$ acts on $\Hom(N,\fg_\Adp)$
trivially.
We denote the invariant subspace by $\Hom(N,\fg_\Adp)^\PI$.
There is an evaluation map $\ev_R\colon\Hom(N,\fg_\Adp)^\PI\to\fg^R$,
$\beta\mapsto(\beta(r_j))_{j=1,\dots,q}$.
The map $\ev_R$ is injective because if $\beta(R)=0$, then $\beta(N)=0$ by
$\PI$-invariance.
So we can regard $\Hom(N,\fg_\Adp)^\PI$ as the subspace $\im(\ev_R)$ in
$\fg^R$.

\begin{thm}\label{thm:rank}
Let $(d\br)_\phi\colon\fg^X\to\fg^R$ be the differential of the map
$\br\colon G^X\to G^R$ at $\phi\in\Hom(\PI,G)$.
Then $\ker(d\br)_\phi=Z^1(\PI,\fg_\Adp)$ and $\im(d\br)_\phi$ is contained in
the subspace $\im(\ev_R)$ of $\fg^R$ that is isomorphic to
$\Hom(N,\fg_\Adp)^\PI$, and
\begin{equation}\label{eqn:rank}
\rank(d\br)_\phi=\dim\Hom(N,\fg_\Adp)^\PI-\dim H^2(\PI,\fg_\Adp).
\end{equation}
If $\Hom(N,\fg_\Adp)^\PI\cong\fg^R$, then
$\coker(d\br)_\phi\cong H^2(\PI,\fg_\Adp)$.
\end{thm}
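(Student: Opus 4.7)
The plan is to use equation \eqref{eqn:diff} to make $(d\br)_\phi$ explicit, then identify its kernel and cokernel with group cohomology of $\PI$ via the Hochschild--Serre five-term exact sequence for the presentation $1\to N\to F\to\PI\to 1$.

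By \eqref{eqn:diff}, if $\gam\in\fg^X$ corresponds to $\tgam\in Z^1(F,\fg_\Adp)$, then $(d\br)_\phi(\gam)=(\tgam(r_j))_{j=1,\dots,q}$. I would then analyze $\tgam|_N$ using the cocycle identity together with $\phi|_N=e$, which holds because $\phi$ factors through $\PI$. Two short computations are decisive: \emph{additivity} on $N$, $\tgam(n_1n_2)=\tgam(n_1)+\Ad_{\phi(n_1)}\tgam(n_2)=\tgam(n_1)+\tgam(n_2)$, and \emph{equivariance}, $\tgam(sns^{-1})=\Ad_{\phi(s)}\tgam(n)$ for $s\in F$, $n\in N$, obtained by expanding the cocycle relation twice and using $0=\tgam(e)=\tgam(s)+\Ad_{\phi(s)}\tgam(s^{-1})$. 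These show $\tgam|_N\in\Hom(N,\fg_\Adp)^\PI$ for every $\gam\in\fg^X$, so $\im(d\br)_\phi\subseteq\im(\ev_R)$. Since $N$ is the normal closure of $R$, the vanishing $\tgam|_R=0$ propagates by additivity and equivariance to $\tgam|_N=0$; hence $\gam\in\ker(d\br)_\phi$ iff $\tgam$ descends to a cocycle on $\PI$, i.e.\ $\ker(d\br)_\phi=Z^1(\PI,\fg_\Adp)$.

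For the rank formula, let $\rho\colon\fg^X\to\Hom(N,\fg_\Adp)^\PI$ denote the restriction $\gam\mapsto\tgam|_N$, so that $(d\br)_\phi=\ev_R\circ\rho$. Every coboundary in $B^1(F,\fg_\Adp)$ vanishes on $N$ because $\phi|_N=e$, so $\rho$ factors through $H^1(F,\fg_\Adp)$ with the same image. Since $F$ is free, $H^p(F,\fg_\Adp)=0$ for $p\ge 2$, and the Hochschild--Serre five-term exact sequence reduces to
\[0\to H^1(\PI,\fg_\Adp)\to H^1(F,\fg_\Adp)\to\Hom(N,\fg_\Adp)^\PI\to H^2(\PI,\fg_\Adp)\to 0.\]
Identifying $\rho$ with the middle restriction then gives $\coker\rho\cong H^2(\PI,\fg_\Adp)$, whence $\rank(d\br)_\phi=\dim\Hom(N,\fg_\Adp)^\PI-\dim H^2(\PI,\fg_\Adp)$. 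When $\ev_R$ is onto $\fg^R$, one further concludes $\coker(d\br)_\phi=\coker\rho\cong H^2(\PI,\fg_\Adp)$.

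The main technical obstacle is verifying that $\rho$ really agrees with the restriction map appearing in the five-term exact sequence (as opposed to merely having the right domain and codomain). This is a bookkeeping exercise with the bar resolution, but it is the one place where one must be careful; once it is settled, everything else follows from the explicit formula for $(d\br)_\phi$ and the freeness of $F$.
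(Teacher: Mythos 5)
Your proposal is correct and takes essentially the same route as the paper: both hinge on the factorisation $(d\br)_\phi=\ev_R\circ\rho$ (the paper calls $\rho$ by $\res^F_N$) and on an exact sequence of group cohomology identifying $\coker\rho$ with $H^2(\PI,\fg_\Adp)$. The only cosmetic difference is that you invoke the five-term exact sequence of the Lyndon--Hochschild--Serre spectral sequence at the cohomology level, after quotienting by $B^1(F,\fg_\Adp)$, whereas the paper quotes the Eilenberg--MacLane exact sequence directly at the level of $Z^1(F,\fg_\Adp)$; since the paper explicitly notes that its sequence also follows from the LHS spectral sequence, these are the same argument up to a harmless quotient by coboundaries (which, as you correctly observe, die on $N$).
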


\begin{proof}
Identifying $\Hom(\PI,G)$ as a subset in $G^X$ as above, the expected tangent
space of $\Hom(\PI,G)$ at $\phi\in\Hom(\PI,G)$ is $\ker(d\br)_\phi=
\bigcap_{j=1}^q\ker(d\tilde r_j)_\phi\subset\fg^X=Z^1(F,\fg_\Adp)$.
By \eqref{eqn:diff}, we obtain $\ker(d\br)_\phi=\ker(\res^F_N)$, where
$\res^F_N\colon Z^1(F,\fg_\Adp)\to Z^1(N,\fg_\Adp)$ is the restriction map.
Therefore $\ker(d\br)_\phi=Z^1(\PI,\fg_\Adp)$.
Since $N$ acts on $\fg_\Adp=\fg$ trivially, we have
$Z^1(N,\fg_\Adp)=\Hom(N,\fg_\Adp)$.
In fact, the image of $\res^F_N$ is contained in the invariant subspace
$\Hom(N,\fg_\Adp)^\PI$.
So we write $\res^F_N\colon Z^1(F,\fg_{\Ad\circ\rho})\to\Hom(N,\fg_\Adp)^\PI$.
By \eqref{eqn:diff} again, we get a factorisation
$(d\br)_\phi=\ev_R\circ\res^F_N$ of the map $(d\br)_\phi$
through $\Hom(N,\fg_\Adp)^\PI$, that is, the triangle in the diagramme
\begin{equation}\label{eqn:diag}
\xymatrix@=1.5pc{
\cdots\ar[r] & Z^1(F,\fg_\Adp)\ar[r]^{\hspace{-1em}\res^F_N}
\ar[dr]_{(d\br)_\phi} & \Hom(N,\fg_\Adp)^\PI\ar[d]^{\ev_R}\ar[r]^\Del &
H^2(\PI,\fg_\Adp)\ar[r] & 0\\
&& \fg^R & & }
\end{equation}
is commutative.
Therefore we obtain
\[  \rank(d\br)_\phi=\rank(\res^F_N)=
    \dim\Hom(N,\fg_\Adp)^\PI-\dim\coker(\res^F_N).  \]
If $\ev_R$ is surjective (hence an isomorphism), then
$\coker(d\br)_\phi\cong\coker(\res^F_N)$.

By a classical result of Eilenberg-MacLane \cite{EM}, the horizontal row in
\eqref{eqn:diag} is an exact sequence.
There, $\Del(\beta)$ is the obstruction to lifting
$\beta\in\Hom(N,\fg_\Adp)^\PI$ to a $1$-cocycle on $F$.
More explicitly, let $c\colon\PI\to F$ be any (set-theoretic) section and let
$\tilde c\colon\PI\times\PI\to N$ be defined by
$c(a_1)c(a_2)=c(a_1a_2)\tilde c(a_1,a_2)$, where $a_1,a_2\in\PI$.
Then $\beta\circ\tilde c\in Z^2(\PI,\fg_\Adp)$ and its class
$\Del(\beta):=[\beta\circ\tilde c]\in H^2(\PI,\fg_\Adp)$ does not depend on
the choice of $c$.
This exact sequence also follows from the Lyndon-Hochschild-Serre spectral
sequence \cite{L48,HS} of group cohomology associated to $\PI=F/N$ or from
Gruenberg's resolution \cite{Gru}.
Consequently, $\coker(\res^F_N)\cong H^2(\PI,\fg_\Adp)$ and the results follow.
\end{proof}

The vanishing of $H^2(\PI,\fg_\Adp)$ is the condition that any vector in
$Z^1(\PI,\fg_\Adp)$ can be integrated to a curve in $\Hom(\PI,G)$
(cf.~\cite{G,GM}).
In \cite{G}, it was stated that the vanishing of $H^2(\PI,\fg_\Adp)$ is a
sufficient condition for $\phi$ to be a smooth point on $\Hom(\PI,G)$ when
$\PI$ is the fundamental group of a closed orientable surface.
We will confirm this in \S\ref{sec:fox}.
Using the implicit function theorem, $\phi$ is a smooth point on
$\Hom(\PI,G)$ if $\rank(d\br)_\phi$ reaches its maximum value.
Our result \eqref{eqn:rank} in Theorem~\ref{thm:rank} is that
$\rank(d\br)_\phi$ depends on both $H^2(\PI,\fg_\Adp)$ and
$\Hom(N,\fg_\Adp)^\PI\cong\im(\ev_R)$.
When the map $\ev_R$ is surjective, this reduces to the condition that
$\dim H^2(\PI,\fg_\Adp)$ is minimal, which is clearly satisfied if
$H^2(\PI,\fg_\Adp)=0$ or $H^2(\PI,\fg'_{\Ad'\circ\phi})=0$.
The surjectivity holds when $\PI$ has a presentation with a single relator
(Proposition~\ref{pro:one}), and in particular if $\PI$ is the fundamental
group of a closed orientable or non-orientable surface.
However, for an arbitrary closed manifold $M$ or for an arbitrary finitely
presented group $\PI$, the smoothness of $\Hom(\PI,G)$ at $\phi$ depends not
only on $H^2(\PI,\fg_\Adp)$ but also on the higher cohomology groups or the
higher terms in the resolution \eqref{eqn:resol}.
Unfortunately, we can not provide an example in which the smoothness is
actually affected by these higher terms, nor are we able to show in the
general case that the vanishing of $H^2(\PI,\fg_\Adp)$ or
$H^2(\PI,\fg'_{\Ad'\circ\phi})$ alone is sufficient for the smoothness of
$\Hom(\PI,G)$ at $\phi$ using the deformation techniques in \cite{NR1,NR2}.

\begin{cor}\label{cor:sm-hom}
Let $\PI=F/N$ be a finitely presented group and $G$ be a connected Lie group.
If at $\phi\in\Hom(\PI,G)$, the number
$\dim\Hom(N,\fg_\Adp)^\PI-\dim H^2(\PI,\fg_\Adp)$ reaches its maximal value,
then $\phi$ is a smooth point on $\Hom(\PI,G)$ and
$T_\phi\Hom(\PI,G)\cong Z^1(\PI,\fg_\Adp)$.
The smooth part of $\Hom(\PI,G)$ is of dimension
\[ |X|\dim G-\dim\Hom(N,\fg_\Adp)^\PI+\dim H^2(\PI,\fg_\Adp).  \]
\end{cor}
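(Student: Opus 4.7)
The plan is to realise $\Hom(\PI,G) = \br^{-1}(e,\ldots,e)$ as the zero locus of the real-analytic map $\br\colon G^X \to G^R$ and to apply the implicit function theorem. Theorem~\ref{thm:rank} provides the bridge: it identifies $\rank(d\br)_\phi$ with $\dim\Hom(N,\fg_\Adp)^\PI - \dim H^2(\PI,\fg_\Adp)$ and $\ker(d\br)_\phi$ with $Z^1(\PI,\fg_\Adp)$, converting the cohomological hypothesis into a rank condition on $(d\br)_\phi$.

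Set $r := \rank(d\br)_\phi$. Lower semi-continuity of rank for smooth maps between finite-dimensional manifolds yields an open neighbourhood $U$ of $\phi$ in $G^X$ on which $\rank(d\br)_{\phi'} \ge r$. Applying Theorem~\ref{thm:rank} at any $\phi'\in U\cap\Hom(\PI,G)$ and invoking the maximality hypothesis forces $\rank(d\br)_{\phi'} = r$ on $U\cap\Hom(\PI,G)$. I would then choose $r$ linear combinations of the components of $\br$ whose differentials at $\phi$ span $\im(d\br)_\phi$; by the IFT they cut out a real-analytic submanifold $M\subset U$ of codimension $r$ through $\phi$ with $T_\phi M = \ker(d\br)_\phi = Z^1(\PI,\fg_\Adp)$, and with $\Hom(\PI,G)\cap U \subset M$. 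If $M$ coincides with $\Hom(\PI,G)$ near $\phi$, then the corollary follows at once, with dimension $|X|\dim G - r = |X|\dim G - \dim\Hom(N,\fg_\Adp)^\PI + \dim H^2(\PI,\fg_\Adp)$.

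The main obstacle is precisely this last step. The remaining components of $\br$ restricted to $M$ have vanishing differential at every point of $\Hom(\PI,G)\cap M$ by the rank-$r$ condition along $\Hom(\PI,G)$, but this only gives first-order vanishing along a subset. To upgrade it to identical vanishing on $M$ near $\phi$ one uses that $\br$ is real-analytic (guaranteed because $G$ is a real-analytic Lie group) and invokes the standard principle that the smooth locus of a real-analytic variety $f^{-1}(0)$ coincides with the locus on which $\rank(df)$ is locally maximal along the variety. With this in hand $M = \Hom(\PI,G)$ locally, which delivers both the submanifold structure with tangent space $Z^1(\PI,\fg_\Adp)$ and the dimension formula for the smooth part of $\Hom(\PI,G)$.
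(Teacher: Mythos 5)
Your approach matches the paper's own (one-line) justification: apply Theorem~\ref{thm:rank} to translate the cohomological hypothesis into a rank condition on $(d\br)_\phi$, then invoke the implicit function theorem at a point of maximal rank. The paper states this step without elaboration, and you are right that there is a genuine subtlety here: the maximality hypothesis plus lower semicontinuity gives $\rank(d\br)$ constant only \emph{along} $\Hom(\PI,G)$, not on a full neighbourhood in $G^X$, so the constant-rank theorem does not apply directly. Your fix---cut out the $(|X|\dim G - r)$-manifold $M$ by $r$ independent components and then argue that the remaining components of $\br$ vanish identically on $M$ near $\phi$ using analyticity---is the correct way to close this, and is implicit in the paper's appeal to the implicit function theorem.

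One caution on the invoked ``standard principle'': it should not be stated as a biconditional. A smooth point of the variety can exhibit sub-maximal rank of $d\br$ (for instance on a lower-dimensional component, or where the ideal generated by the relator functions is non-reduced), so ``smooth locus $=$ locally-maximal-rank locus'' is too strong. What you actually need, and what is true, is the one direction ``locally maximal rank along the variety $\Rightarrow$ smooth of codimension $r$''. The underlying reason is that on each component of an analytic (or algebraic) variety there is a dense open set on which the rank of the Jacobian equals the codimension of that component, so a point where the rank is maximal along the variety automatically has rank equal to the codimension of its component, and then the usual implicit function theorem applies. Phrased this way, your step $M = \Hom(\PI,G)$ near $\phi$ follows without circularity, and the rest of your argument (tangent space $= \ker(d\br)_\phi = Z^1(\PI,\fg_\Adp)$ and the dimension formula) goes through as you wrote it.
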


If $\ev_R$ is surjective or equivalently, if $\Hom(N,\fg_\Adp)^\PI\cong\fg^R$,
then the dimension of the smooth part of $\Hom(\PI,G)$ becomes
$(|X|-|R|)\dim G+\dim H^2(\PI,\fg_\Adp)$, where $\phi$ is chosen so that
$H^2(\PI,\fg_\Adp)$ is of minimal dimension.
In general, $\ev_R$ is not surjective as the relators in the presentation
might not be independent themselves.
A {\em relation among the relators} is of the form $s(r_1,\dots,r_q)=e$, where
$s$ is a word on $R$ such that it becomes the identity element when each $r_j$
($j=1,\dots,q$) is substituted by the word on $X$ it represents.
More precisely, let $F_R$ be the free group generated by $R$ and let
$\varrho\colon F_R\to N\subset F$ be the homomorphism defined by elements of
$R$ as words on $X$.
A relation among the relators is given by an element $s\in F_R$ such that
$\varrho(s)$ is the identity element of $N$.
Any relation among the relators, say $s$, defines a map
$\tilde s\colon G^R\to G$ such that $\tilde s\circ\br\colon G^X\to G$
is the constant map taking value of the identity element.
Clearly, the image of $\br$ is contained in $\tilde s^{-1}(e)$.
We show that the subspace $\im(\ev_R)\subset\fg^R$, which is isomorphic
to $\Hom(N,\fg_\Adp)^\PI$, is contained in the formal tangent space of
$\tilde s^{-1}(e)$ at $(e,\dots,e)\in G^R$, and therefore it can not be the
whole space $\fg^R$ if there are non-trivial relations among the relators in
the presentation of $\PI$.

\begin{lem}
If $s$ is a relation among the relators, then
$\im(\ev_R)\subset\ker(d\tilde s)_{(e,\dots,e)}$.
\end{lem}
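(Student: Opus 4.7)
The plan is to reuse the differentiation formula \eqref{eqn:diff}, but applied to the free group $F_R$ on the set of relators rather than to $F$, with the base point $(e,\dots,e)\in G^R$ corresponding to the trivial homomorphism $\psi_0\colon F_R\to G$. The derivation of \eqref{eqn:diff} depends only on the formalism of a free group together with a homomorphism to $G$, so applied to the pair $(F_R,\psi_0)$ it yields
\[ (d\tilde s)_{(e,\dots,e)}(\gam)=\tgam(s) \]
for every $\gam=(\gam_j)_{j=1,\dots,q}\in\fg^R$ and every word $s\in F_R$, where $\tgam\in Z^1(F_R,\fg_{\Ad\circ\psi_0})$ is the unique $1$-cocycle determined by $\tgam(r_j)=\gam_j$ for each $j$. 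Since $\psi_0$ is trivial, the $F_R$-action on $\fg$ is trivial and the cocycle condition collapses to additivity: $\tgam$ is nothing more than a group homomorphism $F_R\to(\fg,+)$.

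Next, given $\beta\in\Hom(N,\fg_\Adp)^\PI$, I would identify $\tgam$ in the particular case $\gam=\ev_R(\beta)$. Because $N$ lies in the kernel of the $\PI$-action on $\fg_\Adp$, the map $\beta$ is itself a group homomorphism $N\to(\fg,+)$, so $\beta\circ\varrho\colon F_R\to(\fg,+)$ is a homomorphism too. On the generators of $F_R$ one has $(\beta\circ\varrho)(r_j)=\beta(r_j)=\gam_j$, so by freeness of $F_R$ the two homomorphisms agree everywhere, $\tgam=\beta\circ\varrho$.

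The conclusion is then immediate: if $s$ is a relation among the relators, then $\varrho(s)=e$ by definition, hence $\tgam(s)=\beta(\varrho(s))=\beta(e)=0$, which by the displayed formula means $(d\tilde s)_{(e,\dots,e)}(\ev_R(\beta))=0$. As this holds for every $\beta\in\Hom(N,\fg_\Adp)^\PI$, we obtain $\im(\ev_R)\subset\ker(d\tilde s)_{(e,\dots,e)}$. The only conceptual point to check is that \eqref{eqn:diff} applies verbatim to the free group on any finite set equipped with any homomorphism to $G$; once this is granted, the rest of the argument is just matching two homomorphisms on a set of free generators, and there is no serious obstacle.
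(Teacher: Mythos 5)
Your argument is correct and essentially coincides with the paper's: both proofs apply \eqref{eqn:diff} to the free group $F_R$ with the trivial base homomorphism, identify the resulting cocycle on $F_R$ attached to $\ev_R(\beta)$ with $\beta\circ\varrho$ (the paper phrases this as $\ev_R$ intertwining with the pullback $\varrho^*$), and then use $\varrho(s)=e$ to conclude. Your version merely makes the identification $\tgam=\beta\circ\varrho$ explicit by matching the two homomorphisms on the free generators of $F_R$, which is a clean way to justify what the paper states as an intertwining relation.
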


\begin{proof}
Note that both $N$ and $F_R$ acts trivially on the module $\fg_\Adp$.
Under the identifications $Z^1(N,\fg_\Adp)^\PI\cong\Hom(N,\fg_\Adp)^\PI$ and
$Z^1(F_R,\fg)\cong\fg^R$, the map $\ev_R\colon\Hom(N,\fg_\Adp)^\PI\to\fg^R$
intertwines with the pullback map
$\varrho^*\colon Z^1(N,\fg_\Adp)^\PI\to Z^1(F_R,\fg)$.
So for any $\beta\in\Hom(N,\fg_\Adp)^\PI$, we deduce from \eqref{eqn:diff} that
\[ d\tilde s(\ev_R(\beta))=(\varrho^*\tilde\beta)(s)=\tilde\beta(\varrho(s))
   =\tilde\beta(e)=0,  \]
which verifies the result.
\end{proof}

\subsection{Fox calculus, the relation module and the free resolution}
\label{sec:fox}
In this subsection, using additional algebraic concepts, we provide another
derivation of Theorem~\ref{thm:rank}, together with further understandings
of the condition $\Hom(N,\fg)^\PI\cong\fg^R$ there.

Let $w$ be a word in the free group $F$ on the finite set $X=\{x_1,\dots,x_d\}$
as in \S\ref{sec:smooth-r}.
If $G$ is a Lie group, the differential \eqref{eqn:diff} of the map
$\tilde w\colon G^X\to G$ can also be expressed in terms of the Fox derivatives
\cite{F}, which we now recall.
A {\em derivation} on $\bZ F$ is a $\bZ$-linear map $\del\colon\bZ F\to\bZ F$
such that $\del(uv)=\del(u)\eps(v)+u\del(v)$ for all $u,v\in\bZ F$, where
$\eps\colon\bZ F\to\bZ$ is the {\em augmentation map}, i.e.,
$\eps(u)=\sum_{k=1}^mn_k$ if $u=\sum_{k=1}^mn_kw_k$, $w_k\in F$, $n_k\in\bZ$.
{\em Fox derivatives} $\pdr_i$ ($i=1,\dots,d$) are derivations on $\bZ F$
defined by $\pdr_i(x_j)=\del_{ij}$ ($i,j=1,\dots,d$).
The differential $d\tilde w$ in \eqref{eqn:diff} can be expressed in terms of
Fox derivatives.
We have (cf.~\cite{G,LM}), for $\gam=(\gam_i)_{i=1,\dots,d}\in\fg^X$,
\begin{equation}\label{eqn:fox}
(d\tilde w)_\phi(\gam)=\sum_{i=1}^d\Ad_{\phi(\pdr_iw)}\gam_i.
\end{equation}

The Abelianisation $\bar N:=N/[N,N]$ of $N$ is called the {\em relation module}
of the presentation $\PI=F/N$ \cite{L62}.
It is a $\bZ\PI$-module: there is a $\PI$-action on $\bar N$ because $F$ acts
on $N$ by conjugation and hence on $\bar N$, while its subgroup $N$ acts
trivially on $\bar N$.
Clearly, $\Hom(\bar N,\fg)=\Hom(N,\fg)$ and
$\Hom(\bar N,\fg_\Adp)^\PI=\Hom(N,\fg_\Adp)^\PI$.

There is a resolution of $\bZ$ by free $\bZ\PI$-modules
\begin{equation}\label{eqn:resol}
\cdots\stackrel{d_3}\longrightarrow M_2\stackrel{d_2}\longrightarrow M_1
\stackrel{d_1}\longrightarrow M_0\stackrel{d_0}\longrightarrow\bZ,
\end{equation}
where $M_0=\bZ\PI$, $d_0=\eps$ is the augmentation map,
$M_1=\bigoplus_{i=1}^d\bZ\PI\,\hat x_i$ has a basis $\{\hat x_i\}$ in 1-1
correspondence with $X$, $d_1(\hat x_i)=[x_i-1]_\PI$,
$M_2=\bigoplus_{j=1}^q\bZ\PI\,\hat r_j$ has a basis $\{\hat r_j\}$ in 1-1
correspondence with $R$, $d_2(\hat r_j)=\sum_{i=1}^d[\pdr_ir_j]_\PI\,\hat x_i$
(see for example \cite[\S II.3]{LS}).
Here we denote by $[u]_\PI$ the image of $u\in\bZ F$ in $\bZ\PI$.
It can be shown (see for example \cite[\S11.5, Theorem~1]{J}) that
$\ker(d_1)\cong\bar N$, the relation module.

The group cohomology $H^\bullet(\PI,\fg_\Adp)$ is the cohomology of the
cochain complex
\[ \Hom_{\bZ\PI}(M_0,\fg_\Adp)=\fg\stackrel{d_1^\vee}\longrightarrow
\Hom_{\bZ\PI}(M_1,\fg_\Adp)=\fg^X\stackrel{d_2^\vee}\longrightarrow
\Hom_{\bZ\PI}(M_2,\fg_\Adp)=\fg^R\stackrel{d_3^\vee}\longrightarrow\cdots \]
dual to \eqref{eqn:resol} with values in $\fg_\Adp$.
The maps are $d_1^\vee=(\Ad_{\phi(x_i)}-1)_{i=1,\dots,d}$,
$d_2^\vee(\gam)=\big(\sum_{i=1}^d\Ad_{\phi(\pdr_ir_j)}
\gam_i\big)_{j=1,\dots,q}$ for $\gam=(\gam_i)_{i=1,\dots,d}\in\fg^X$.
By \eqref{eqn:fox}, we have $d_2^\vee=(d\br)_\phi$, and therefore
\[\rank(d\br)_\phi=\rank(d_2^\vee)=\dim\ker(d_3^\vee)-\dim H^2(\PI,\fg_\Adp).\]
By the exact sequence $\cdots\to M_3\stackrel{d_3}\longrightarrow M_2
\stackrel{d_2}\longrightarrow\ker(d_1)\to0$, we get $\ker(d_3^\vee)\cong
\Hom_{\bZ\PI}(\ker(d_1),\fg_\Adp)=\Hom(\bar N,\fg_\Adp)^\PI$.
Hence $\dim\ker(d_3^\vee)=\dim\Hom(\bar N,\fg_\Adp)^\PI$, and \eqref{eqn:rank}
in Theorem~\ref{thm:rank} is recovered.

As remarked before, $\Hom(N,\fg_\Adp)^\PI\cong\fg^R$ is equivalent to the
surjectivity of the map $\ev_R\colon\Hom(N,\fg_\Adp)^\PI\to\fg^R$.
This is clearly satisfied if $M_3=0$ in the resolution \eqref{eqn:resol}, which
implies that the cohomological dimension of $\PI$ is not greater than $2$.
In fact, $M_3$ should be thought of the module generated by the relations
among the relators.
In the special case when the presentation has a single relator \cite{L50},
we show that $\ev_R$ is always surjective.
Examples of groups whose presentations have a single relator are fundamental
groups of compact orientable or non-orientable surfaces.
Fox calculus was used in \cite{G} to study the character varieties of the
fundamental groups of orientable surfaces; we will apply Fox calculus to the
situation when the surfaces are non-orientable.
See \S\ref{sec:surf} for details.

\begin{pro}\label{pro:one}
Let $\PI$ be a group generated by a finite set $X$ with a single relator $r$.
Let $G$ be a connected Lie group with Lie algebra $\fg$.
Then $r$ defines a map $\tilde r\colon G^X\to G$ and
$\Hom(\PI,G)=\tilde r^{-1}(e)$.
If $\phi\in\Hom(\PI,G)$, then $\coker(d\tilde r)_\phi\cong H^2(\PI,\fg_\Adp)$.
If $\dim H^2(\PI,\fg_\Adp)$ is minimal at $\phi$, then $\phi$ is a smooth point
on $\Hom(\PI,G)$ and $T_\phi\Hom(\PI,G)\cong Z^1(\PI,\fg_\Adp)$.
The smooth part of $\Hom(\PI,G)$ is of dimension
\[  (|X|-1)\dim G+\dim H^2(\PI,\fg_\Adp).   \]
\end{pro}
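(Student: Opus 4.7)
The plan is to specialize Theorem~\ref{thm:rank} to the single-relator setting. With $|R|=1$ the map $\br$ reduces to $\tilde r\colon G^X\to G$, and $\Hom(\PI,G)=\tilde r^{-1}(e)$ is immediate from the presentation $\PI=F/N$ with $N$ the normal closure of $\{r\}$.

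The key step is to verify the hypothesis of Theorem~\ref{thm:rank}, namely surjectivity of $\ev_R\colon\Hom(N,\fg_\Adp)^\PI\to\fg^R=\fg$. Since $N$ is the normal closure of the single element $r$, the relation module $\bar N=N/[N,N]$ is cyclic as a $\bZ\PI$-module, generated by the class of $r$. By Lyndon's identity theorem for one-relator groups (reflecting the fact that no non-trivial relation can hold on a single generator in the free group $F_R=\langle r\rangle$), one may take $M_3=0$ in the resolution \eqref{eqn:resol}; consequently $d_3^\vee=0$, $\ker(d_3^\vee)=\fg^R$, and $\ev_R$ is surjective. Theorem~\ref{thm:rank} then yields the asserted isomorphism $\coker(d\tilde r)_\phi\cong H^2(\PI,\fg_\Adp)$, together with the rank formula
\[ \rank(d\tilde r)_\phi=\dim G-\dim H^2(\PI,\fg_\Adp). \]

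For the smoothness assertion, the hypothesis that $\dim H^2(\PI,\fg_\Adp)$ is minimal at $\phi$ translates, via this rank formula, into maximality of $\rank(d\tilde r)_\phi$ over $\Hom(\PI,G)$. By lower semicontinuity of rank, generic points $\phi'\in\Hom(\PI,G)$ near $\phi$ also realize this maximum; since smooth points are Zariski-dense on any irreducible component, such $\phi'$ are smooth with local dimension equal to the tangent space dimension $|X|\dim G-\rank(d\tilde r)_{\phi'}=|X|\dim G-\rank(d\tilde r)_\phi$. Upper semicontinuity of local dimension then forces $\dim_\phi\Hom(\PI,G)$ to equal the Zariski tangent space dimension $\dim T_\phi\Hom(\PI,G)=\dim\ker(d\tilde r)_\phi$, so $\phi$ is smooth. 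Theorem~\ref{thm:rank} identifies the tangent space with $Z^1(\PI,\fg_\Adp)$, and the dimension count $(|X|-1)\dim G+\dim H^2(\PI,\fg_\Adp)$ follows by arithmetic.

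The principal obstacle is the surjectivity of $\ev_R$, which depends crucially on the single-relator hypothesis via Lyndon's theorem and the fact that $\bar N$ is (essentially) free cyclic in that situation; the smoothness conclusion is then a routine application of rank-theoretic dimension considerations together with Theorem~\ref{thm:rank}.
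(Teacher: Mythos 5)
Your key step is the claim that one may take $M_3 = 0$ in the resolution \eqref{eqn:resol} whenever the presentation has a single relator, which you attribute to Lyndon's identity theorem. This is exactly backwards in the case of a proper-power relator. Lyndon's identity theorem says that if $r = s^m$ with $m \geq 1$ maximal, then $\bar N \cong \bZ\PI \otimes_{\bZ C} \bZ$, where $C \leq \PI$ is the cyclic subgroup of order $m$ generated by $\sigma := [s]_\PI$. When $m = 1$ (i.e.\ $r$ is not a proper power), $C$ is trivial, $\bar N \cong \bZ\PI$ is free, and $M_3 = 0$ is valid. But when $m \geq 2$ there is a nontrivial $\bZ\PI$-relation on the generator of $\bar N$: since $s r s^{-1} = r$ in $F$, the class $\bar r$ satisfies $(\sigma - 1)\cdot \bar r = 0$, so $\bar N$ is a proper cyclic quotient of $\bZ\PI$ and the resolution must continue ($M_3 \neq 0$; in fact the resolution is infinite, reflecting the torsion element $\sigma$ in $\PI$). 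Your parenthetical justification --- that no nontrivial relation can hold on the single generator in the free group $F_R = \langle r \rangle$ --- is beside the point: what determines $M_3$ is the $\bZ\PI$-module relations on $\bar r \in \bar N$, not free-group relations on $r$, and $(\sigma-1)\bar r = 0$ is precisely such a relation in the proper-power case.

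The paper's proof recognizes this and treats the two cases separately. When $r$ is not a proper power it makes the same $M_3 = 0$ observation you do; when $r = s^m$ with $m \geq 2$ it instead works directly from the short exact sequence $0 \to \bZ\PI(\sigma - 1) \to \bZ\PI \to \bar N \to 0$ and computes $\Hom(\bar N, \fg_\Adp)^\PI$ from its dual. So the genuine gap in your argument is the proper-power case, which cannot be subsumed under Lyndon's theorem in the way you assert and must be addressed by a separate computation. (Your smoothness argument via semicontinuity of rank and of local dimension is a workable but more circuitous alternative to the paper's direct appeal to Corollary~\ref{cor:sm-hom}.)
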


\begin{proof}
We have $\PI=F/N$, where $F$ is the free group generated by $X$ and $N$
is the normal closure of $R:=\{r\}$ in $F$.
If $r$ is not a proper power of any element in $F$, then $M_3=0$ in
\eqref{eqn:resol} and $\Hom(N,\fg_\Adp)^\PI\cong\fg$.
If $r=s^m$ for some $s\in F$ and $m\ge2$, then there is a short exact sequence
$0\to\bZ\PI([s]_\PI-1)\to\bZ\PI\to\bar N\to0$ of $\bZ\PI$-modules (see for
example \cite[Proposition~2.4.19(b)]{CZ}).
Taking the dual sequence (with values in $\fg$), we obtain an exact sequence
\[ 0\to\Hom(\bar N,\fg_\Adp)^\PI\to\Hom_{\bZ\PI}(\bZ\PI,\fg_\Adp)
   \to\Hom_{\bZ\PI}(\bZ\PI([s]_\PI-1),\fg_\Adp)\to\cdots. \]
We show that $\Hom_{\bZ\PI}(\bZ\PI([s]_\PI-1),\fg_\Adp)=0$.
In fact, if $f\in\Hom_{\bZ\PI}(\bZ\PI([s]_\PI-1),\fg_\Adp)$, then for all
$u\in\bZ\PI$, $f(u([s]_\PI-1))=f(u[s]_\PI\cdot1)-f(u\cdot1)=f(1)-f(1)=0$
by $\PI$-invariance.
Consequently,
$\Hom(\bar N,\fg_\Adp)^\PI\cong\Hom_{\bZ\PI}(\bZ\PI,\fg_\Adp)\cong\fg$.
The results then follow from Corollary~\ref{cor:sm-hom}.
\end{proof}

\subsection{Smooth points on the character variety}\label{sec:luna}
Let $\PI$ be a finitely generated group and $G$ be a complex reductive Lie
group with Lie algebra $\fg$.
A homomorphism $\phi\in\Hom(\PI,G)$ is {\em reductive} if the image
$\phi(\PI)$ is contained in the Levi subgroup of any parabolic subgroup
containing $\phi(\PI)$.
Let $\Hom^\red(\PI,G)$ be the set of reductive homomorphisms equipped with
the subset topology.
The action of $G$ on $\Hom(\PI,G)$ by conjugation on $G$ preserves the
subspace $\Hom^\red(\PI,G)$.
The quotient space $\cR(\PI,G):=\Hom^\red(\PI,G)/G$ is the {\em character
variety} (or the {\em representation variety}) of $\PI$, and we denote by
$[\phi]$ the point in $\cR(\PI,G)$ represented by $\phi\in\Hom^\red(\PI,G)$.
Since $\phi\in\Hom(\PI,G)$ is reductive if and only if the orbit $G\cdot\phi$
is closed in $\Hom(\PI,G)$ \cite[Thm 30]{Si}, the quotient topology on
$\cR(\PI,G)$ is Hausdorff.
That $\phi$ is reductive is equivalent to the complete reducibility of the
representation $\Adp$ of $\PI$ on $\fg$.
In fact, since $\Hom(\PI,G)\subset G^X$, the latter is also equivalent to
the closedness of the orbit $G\cdot\phi$ \cite[Thm~11.4]{Ri}.

Whereas Corollary~\ref{cor:sm-hom} and Proposition~\ref{pro:one} give
sufficient conditions for the smoothness of $\Hom(\PI,G)$ at a point $\phi$,
the smoothness of the character variety $\cR(\PI,G)$ at $[\phi]$ has
further requirements due to possible quotient singularities.
A crucial step of establishing the local structure of a quotient space like
$\cR(\PI,G)$ is the slice theorem.

Suppose $G$ acts algebraically on an affine variety $X$.
Let $X\sslash G$ be the quotient by $G$ of the subset of $x\in X$ such
that $G\cdot x$ is closed, equipped with the quotient topology which is
Hausdorff, and let $[x]$ be the point in $X\sslash G$ from $x$.
A {\em slice} at $x\in X$ is a locally closed affine subvariety $S$ containing
$x$ and invariant under the stabiliser $G_x$ of $x$ such that the $G$-action
induces a $G$-equivariant homeomorphism from $G\times_{G_x}S$ to a
neighbourhood of the orbit $G\cdot x$ in $X$.
It is well known that a slice always exists for compact group actions.
Luna's slice theorem \cite{Lu} (see \cite{Dr} for an introduction) is about
the existence of a slice in the algebro-geometric context.
A subset $U\subset X$ is {\em saturated} if for all $x\in U$ and $y\in X$,
$\overline{G\cdot x}\cap\overline{G\cdot y}\ne\emptyset$ implies $y\in U$.
A saturated subset is $G$-invariant and a saturated open subset in $X$
descends to an open subset in $X\sslash G$.
Luna's slice theorem states that if $G\cdot x$ is closed in $X$, then there is
a slice $S$ at $x$ such that the $G$-morphism $G\times_{G_x}S \to X$ induced
by the action of $G$ on $X$ maps surjectively onto a saturated open subset
$U\subset X$ and that the map $G\times_{G_x}S\to U$ is strongly \'etale.
Recall from \cite[Definition 4.14]{Dr} that \emph{strongly \'etale} implies
that the induced map $S\sslash G\to U\sslash G$ is \'etale, which implies
that the underlying analytic spaces are locally isomorphic in the complex
topology \cite[\S III.5, Corollary~2]{Mu}.
Consequently, there is a homeomorphism between an open neighbourhood of
$\{x\}/G_x$ in $S\sslash G_x$ and an open neighbourhood of $[x]$ in
$X\sslash G$.
If in addition $X$ is smooth at $x$, then the slice $S$ can be chosen smooth
and we have $T_xX=T_x(G\cdot x)\oplus T_xS$.
If furthermore $G_x$ is trivial or minimal at $x$, then $X\sslash G$ is smooth
at $[x]$, and there is a diffeomorphism from a neighbourhood of $\{x\}/G_x$ in
$S/G_x$ to a neighbourhood of $[x]$ in $X\sslash G$.

Luna's slice theorem applies to our case because $G$ is a complex reductive
Lie group; it was applied to study the singularities of the character
varieties of finitely generated free groups \cite{FL}.

\begin{thm}\label{thm:var}
Let $\PI$ be a finitely presented group and $G$ be a complex reductive Lie
group.
Then for any reductive homomorphism $\phi\in\Hom(\PI,G)$, there is a slice
$S$ at $\phi$ of the $G$-action on $\Hom(\PI,G)$ such that there is a
$G$-equivariant local homeomorphism from $G\times_{G_\phi}S$ to a saturated
open subset in $\Hom(\PI,G)$.
Consequently, there is a homeomorphism from a neighbourhood of
$\{\phi\}/G_\phi$ in $S\sslash G_\phi$ to a neighbourhood of $[\phi]$ in
$\cR(\PI,G)$.
If in addition $\phi$ is a smooth point in $\Hom(\PI,G)$, then $S$ can be
chosen smooth and $T_\phi\Hom(\PI,G)=T_\phi(G\cdot\phi)\oplus T_\phi S$.
If furthermore $\phi$ is good, then $[\phi]$ is a smooth point in
$\cR(\PI,G)$, and there is a diffeomorphism from a neighbourhood of $\phi$
in $S$ to a neighbourhood of $[\phi]$ in $\cR(\PI,G)$.
\end{thm}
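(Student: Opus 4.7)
The plan is to invoke Luna's slice theorem, whose statement and consequences have been laid out in the paragraph preceding the theorem, almost verbatim. The preliminary setup is algebraic: as explained in \S\ref{sec:smooth-r}, $\Hom(\PI,G)$ sits inside $G^X$ as the affine subvariety $\br^{-1}(e,\dots,e)$ cut out by the relators, and the simultaneous conjugation action of $G$ on $G^X$ preserves it and is algebraic. Since $G$ is complex reductive and $\phi$ is reductive, the orbit $G\cdot\phi$ is Zariski-closed in $\Hom(\PI,G)$ by the criterion of Richardson \cite[Thm~11.4]{Ri} already quoted above. These three inputs --- reductive group, algebraic action on an affine variety, and closed orbit at $\phi$ --- are precisely the hypotheses of Luna's theorem.

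Next I would apply Luna's theorem directly to produce a locally closed, $G_\phi$-invariant affine slice $S$ through $\phi$ together with a $G$-morphism $\mu\colon G\times_{G_\phi}S\to\Hom(\PI,G)$ that is strongly \'etale onto a saturated open subset $U$. The $G$-equivariant local homeomorphism assertion is then immediate, and saturation of $U$ guarantees that it descends to an open neighbourhood of $[\phi]$ in $\cR(\PI,G)$. Strong \'etaleness yields, on the invariant-theoretic side, that the induced map $S\sslash G_\phi\to U\sslash G$ is \'etale, hence a local analytic isomorphism by \cite[\S III.5, Cor.~2]{Mu}. Composing these identifications produces the desired homeomorphism between a neighbourhood of $\{\phi\}/G_\phi$ in $S\sslash G_\phi$ and a neighbourhood of $[\phi]$ in $\cR(\PI,G)$.

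For the smoothness statements I would use the refinement of Luna's theorem recorded in the preceding paragraph: when $\phi$ is a smooth point of $\Hom(\PI,G)$, the slice $S$ can be chosen smooth and transverse to the orbit, giving the splitting $T_\phi\Hom(\PI,G)=T_\phi(G\cdot\phi)\oplus T_\phi S$. If moreover $\phi$ is good, then $G_\phi=Z(G)$; since $Z(G)$ acts trivially on $G^X$ by conjugation it acts trivially on $S$, so $S\sslash G_\phi=S$ as analytic spaces and $\{\phi\}/G_\phi=\phi$. Composing with the local homeomorphism of the previous step upgrades it to a diffeomorphism from a neighbourhood of $\phi\in S$ onto a neighbourhood of $[\phi]\in\cR(\PI,G)$, and in particular shows $[\phi]$ is a smooth point.

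The main --- and really only --- obstacle is conceptual rather than technical: one must be careful that Luna's algebro-geometric conclusions transfer correctly to the complex-analytic setting in which the theorem is phrased, and that the quotient by the trivially acting $Z(G)$ in the good case genuinely leaves the local analytic structure unchanged. Both points are handled by the Mumford reference invoked just before the statement, so once the three hypotheses of Luna's theorem are checked at $\phi$, every clause of the theorem follows by direct transcription.
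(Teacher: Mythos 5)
Your proposal is correct and follows essentially the same route as the paper: verify that $G$ is reductive, that $\Hom(\PI,G)$ is an affine $G$-variety in $G^X$, and that $\phi$ reductive gives a closed orbit, then read off every clause from Luna's slice theorem and its smooth refinement as recalled in the preceding paragraph. The paper's own proof is a three-sentence transcription of exactly these checks; your version merely spells out the intermediate steps (strong étaleness, the Mumford passage to analytic spaces, and the observation that $Z(G)$ acts trivially in the good case) more explicitly.
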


\begin{proof}
The reductive group $G$ is an affine variety and it acts algebraically on the
affine variety $\Hom(F,G)=G^X$, where $F$ is the free group on the finite set
$X$ of generators of $\PI$.
Since $\PI$ is finitely presented, $\Hom(\PI,G)$ is an affine subvariety in
$G^X$ defined by finitely many relators in $R=\{r_j\}_{j=1,\dots q}$.
The results then follow from Luna's slice theorem and its smooth version
reviewed above.
\end{proof}

The Lie algebra of the stabiliser subgroup $G_\phi$ is
$\fg^\PI=H^0(\PI,\fg_\Adp)$.
Let $\Ad'$ be the adjoint action of $G$ on $\fg'=[\fg,\fg]$ and let
$\fg'_{\Ad'\circ\phi}$ be the vector space $\fg'$ equipped with the
$\PI$-module structure $\Ad'\circ\phi$.
Since $H^i(\PI,\fg_\Adp)=H^i(\PI,\fg'_{\Ad'\circ\phi})\oplus H^i(\PI,Z(\fg))$,
the dimension of $H^i(\PI,\fg_\Adp)$ is minimal if
$H^i(\PI,\fg'_{\Ad'\circ\phi})=0$ for any $i\ge0$.
Let $\Hom_\circ(\PI,G)$ be the set of $\phi\in\Hom^\red(\PI,G)$ such that
$G_\phi=Z(G)$ and the quantity
$\dim_\bC\Hom(N,\fg)^\PI-\dim_\bC H^2(\PI,\fg_\Adp)$ reaches its maximum.
Then $H^0(\PI,\fg_\Adp)=Z(\fg)$ and the quotient
$\cR_\circ(\PI,G):=\Hom_\circ(\PI,G)/G$ is in the smooth part of $\cR(\PI,G)$.
Sometimes $\cR_\circ(\PI,G)$ does coincide with the set of smooth points
(Example~\ref{eg:sing}).
But there can be smooth points outside $\cR_\circ(\PI,G)$, which can even be
empty (Examples~\ref{eg:simple} and \ref{eg:h2}).

\begin{cor}\label{cor:var}
Suppose $\PI=F/N$ is a finitely presented group, where $F$ is a free group
generated by a finite set $X$ and $N$ is a normal subgroup in $F$ generated
in $F$ by a finite set of relators $R$.
Let $\phi\in\Hom_\circ(\PI,G)$.
Then there is a neighbourhood of $[\phi]$ in $\cR_\circ(\PI,G)$ diffeomorphic
to a neighbourhood of $0$ in $H^1(\PI,\fg_\Adp)$, and
\[  \dim_\bC\cR_\circ(\PI,G)=(|X|-1)\dim_\bC G+\dim_\bC Z(\fg)
    -\dim_\bC\Hom(N,\fg_\Adp)^\PI+\dim_\bC H^2(\PI,\fg_\Adp).  \]
In particular, if there exists $\phi\in\Hom_\circ(\PI,G)$ such that
$\Hom(N,\fg_\Adp)^\PI\cong\fg^R$ and $H^2(\PI,\fg'_{\Ad'\circ\phi})=0$, then
\[  \dim_\bC\cR_\circ(\PI,G)=(|X|-|R|-1)\dim_\bC G+\dim_\bC Z(\fg)
    +\dim_\bC H^2(\PI,Z(\fg)).  \]
\end{cor}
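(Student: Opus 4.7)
The plan is to combine Corollary~\ref{cor:sm-hom} with the smooth case of Theorem~\ref{thm:var}, and then do a dimension count using the standard identification of the slice tangent space with $H^1(\PI,\fg_\Adp)$. First I would observe that the hypothesis $\phi\in\Hom_\circ(\PI,G)$ packages exactly what is needed to invoke both results: reductivity of $\phi$ gives the closedness of the orbit required by Luna's theorem; maximality of $\dim\Hom(N,\fg_\Adp)^\PI-\dim H^2(\PI,\fg_\Adp)$ gives smoothness of $\phi$ in $\Hom(\PI,G)$ with $T_\phi\Hom(\PI,G)\cong Z^1(\PI,\fg_\Adp)$ via Corollary~\ref{cor:sm-hom}; and $G_\phi=Z(G)$ (which acts trivially) gives minimality of the stabiliser. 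Hence Theorem~\ref{thm:var} supplies a smooth slice $S$ at $\phi$ and a diffeomorphism from a neighbourhood of $\phi$ in $S$ to a neighbourhood of $[\phi]$ in $\cR(\PI,G)$, together with a splitting $T_\phi\Hom(\PI,G)=T_\phi(G\cdot\phi)\oplus T_\phi S$.

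Next I would identify $T_\phi S\cong H^1(\PI,\fg_\Adp)$. The infinitesimal action of $G$ on $\Hom(\PI,G)$ sends $\xi\in\fg$ to the $1$-cocycle $g\mapsto\xi-\Ad_{\phi(g)}\xi$, which is precisely a coboundary, so the differential of the orbit map is the usual map $\fg\to B^1(\PI,\fg_\Adp)$ with kernel $\fg^\PI=H^0(\PI,\fg_\Adp)=Z(\fg)$. Therefore $T_\phi(G\cdot\phi)\cong\fg/Z(\fg)\cong B^1(\PI,\fg_\Adp)$, and by the direct-sum decomposition
\[ T_\phi S\ \cong\ Z^1(\PI,\fg_\Adp)/B^1(\PI,\fg_\Adp)\ =\ H^1(\PI,\fg_\Adp). \]
Composing with the diffeomorphism from the slice theorem gives the desired local model of $\cR_\circ(\PI,G)$ near $[\phi]$.

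For the dimension formula I would subtract $\dim_\bC B^1(\PI,\fg_\Adp)=\dim_\bC G-\dim_\bC Z(\fg)$ from the expression
\[ \dim_\bC Z^1(\PI,\fg_\Adp)=|X|\dim_\bC G-\dim_\bC\Hom(N,\fg_\Adp)^\PI+\dim_\bC H^2(\PI,\fg_\Adp) \]
provided by Corollary~\ref{cor:sm-hom}. This yields the first stated formula for $\dim_\bC\cR_\circ(\PI,G)$. For the second formula I would substitute $\dim_\bC\Hom(N,\fg_\Adp)^\PI=|R|\dim_\bC G$ under the hypothesis $\Hom(N,\fg_\Adp)^\PI\cong\fg^R$, and use the orthogonal decomposition $H^2(\PI,\fg_\Adp)=H^2(\PI,\fg'_{\Ad'\circ\phi})\oplus H^2(\PI,Z(\fg))$ together with the vanishing of the first summand to replace $H^2(\PI,\fg_\Adp)$ by $H^2(\PI,Z(\fg))$. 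The specialised formula follows by collecting terms.

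There is no serious obstacle here: the corollary is essentially a repackaging of the machinery already established. The only point requiring minor care is the identification $T_\phi(G\cdot\phi)\cong B^1(\PI,\fg_\Adp)$ and ensuring the direct-sum decomposition from Luna's theorem is compatible with this identification, so that the resulting quotient really is the full cohomology group rather than some proper subquotient; both are standard once one writes the infinitesimal action explicitly.
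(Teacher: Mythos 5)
Your proposal is correct and follows essentially the same route as the paper's proof: invoking Theorem~\ref{thm:var} for the slice, identifying $T_\phi(G\cdot\phi)$ with $B^1(\PI,\fg_\Adp)$ via the explicit infinitesimal conjugation action, concluding $T_\phi S\cong H^1(\PI,\fg_\Adp)$, and doing the same dimension arithmetic using Corollary~\ref{cor:sm-hom} together with the splitting $H^2(\PI,\fg_\Adp)=H^2(\PI,\fg'_{\Ad'\circ\phi})\oplus H^2(\PI,Z(\fg))$. The only cosmetic difference is that you subtract $\dim B^1$ from $\dim Z^1$, whereas the paper subtracts $\dim(G\cdot\phi)$ from $\dim\Hom_\circ(\PI,G)$; these are the same computation.
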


\begin{proof}
Recall that if we identify $\fg^X$ with $Z^1(F,\fg_\Adp)$, then
$\ker(d\br)_\phi=Z^1(\PI,\fg_\Adp)$.
Moreover, $T_\phi(G\cdot\phi)=B^1(\PI,\fg_\Adp)$ because for any $\xi\in\fg$,
the corresponding vector field on $G^X$ at $(\phi(x_i))_{i=1,\dots,d}$ is
$(\xi-\Ad_{\phi(x_i)}\xi)_{i=1,\dots,d}$, which is in $B^1(F,\fg_\Adp)$;
it is in fact in $B^1(\PI,\fg_\Adp)$ because $\phi(N)=1$.
The local diffeomorphism from $\cR_\circ(\PI,G)$ to $H^1(\PI,\fg_\Adp)$ follows
from $T_\phi S\cong Z^1(\PI,\fg_\Adp)/B^1(\PI,\fg_\Adp)=H^1(\PI,\fg_\Adp)$.
The complex dimension of $\cR_\circ(\PI,G)$ is that of $H^1(\PI,\fg_\Adp)$;
it is also equal to $\dim_\bC\Hom_\circ(\PI,G)-\dim_\bC(G\cdot\phi)$, where
$\dim_\bC\Hom_\circ(\PI,G)$ is given by Corollary~\ref{cor:sm-hom} and
$\dim_\bC(G\cdot\phi)=\dim_\bC G-\dim_\bC Z(G)$.
The dimension formulas then follow.
\end{proof}

Let $\PI=\pi_1(M)$ be the fundamental group of a compact K\"ahler manifold
$(M,\om)$ of complex dimension $m$.
The universal cover $\oM\to M$ is a principal $\PI$-bundle and it is the
pullback of the universal $\PI$-bundle $E\PI\to B\PI$ by a classifying map
$f\colon M\to B\PI$ that induces an isomorphism on the fundamental groups.
Recall that $H^k(\PI,\bC)\cong H^k(B\PI,\bC)$ for any $k\ge0$.
Given $\phi\in\Hom(\PI,G)$, for any $\al,\beta\in H^1(\PI,\fg_\Adp)$, consider
the cup product $\al\cup\beta\in H^2(\PI,\bC)\cong H^2(B\PI,\bC)$ defined using
an invariant non-degenerate symmetric bilinear form on $\fg$.
{}From $[\om]\in H^2(M,\bC)$, we get $[\om^{m-1}/(m-1)!]\in H^{2(m-1)}(M,\bC)$
and its Poincar\'e dual $\PD[\om^{m-1}/(m-1)!]\in H_2(M,\bC)$.
There is a symplectic form $\upom_\phi$ on $H^1(\PI,\fg_\Adp)$ given by
\cite{JR}
\begin{equation}\label{eqn:omp}
\upom_\phi(\al,\beta)=\bra\al\cup\beta,f_*\PD[\om^{m-1}/(m-1)!]\ket
   =\bra f^*(\al\cup\beta)\cup[\om^{m-1}/(m-1)!],[M]\ket.
\end{equation}
and it agrees with the gauge theoretic definition in \eqref{eqn:omD}.
When $M$ is a compact orientable surface (i.e.,$m=1$), the symplectic form
$\upom_\phi(\al,\beta)=\bra f^*(\al\cup\beta),[M]\ket$ was constructed in
\cite{G}.

\subsection{Character variety of an index-$2$ subgroup}\label{sec:surf}
Let $\PI$ be a finitely generated group containing a normal subgroup $\tPI$
of finite index.
Then $\tPI$ is also finitely generated.
If $\phi\in\Hom(\PI,G)$, let $\tphi$ be its restriction to $\tPI$.
Then the finite group $\Gam:=\PI/\tPI$ acts on $H^k(\tPI,\fg_\Adtp)$ and
$H^k(\tPI,\fg_\Adtp)^\Gam\cong H^k(\PI,\fg_\Adp)$.
To verify this, we can use the Lyndon-Hochschild-Serre spectral sequence
\cite{L48,HS}, with $E_2^{pq}=H^p(\Gam,H^q(\tPI,\fg_\Adtp))$, converging
to $H^k(\PI,\fg_\Adp)$.
Since $\Gam$ is a finite group and $H^q(\tPI,\fg_\Adtp)$ for all $q\ge0$
are divisible, we have $E_2^{pq}=0$ for all $p>0,q\ge0$, and thus
$H^k(\PI,\fg_\Adp)=E_2^{0k}=H^k(\tPI,\fg_\Adtp)^\Gam$ for all $k\ge0$.

Though the general case is quite straightforward, we specialise to the case
where $\tPI$ is an index-$2$ subgroup in $\PI$.
Since $\Gam=\bZ_2$, there is an involution $\tau$ on $H^k(\tPI,\fg_\Adtp)$
and $H^k(\tPI,\fg_\Adtp)^\tau\cong H^k(\PI,\fg_\Adp)$ for all $k\ge0$.
The action of $\tau$ can be understood as follows.
Pick any $c\in\PI\backslash\tPI$.
Then $c$ acts on $\tPI$ by $\Ad_c$ and on $\fg$ by $\Ad_{\phi(c)}$.
Let $\tau$ be the induced action of $c$ on $H^k(\tPI,\fg_\Adtp)$;
it does not depend on the choice of $c$.
Since $c^2\in\tPI$ acts on the cohomology groups trivially, $\tau$ is an
involution and, consequently,
\[  H^k(\tPI,\fg_\Adtp)=
    H^k(\tPI,\fg_\Adtp)^\tau\oplus H^k(\tPI,\fg_\Adtp)^{-\tau},    \]
where $H^k(\tPI,\fg_\Adtp)^{\pm\tau}$ are, respectively, the eigenspaces on
which $\tau$ takes eigenvalues $\pm1$.
As explained above, $H^k(\PI,\fg_\Adp)\cong H^k(\tPI,\fg_\Adtp)^\tau$.

If $\PI=\pi_1(M)$ and $\tPI=\pi_1(\tM)$, where $M$ is a closed non-orientable
surface and $\tM$ is its the oriented double cover, then $\tPI$ can be
identified as an index-$2$ subgroup of $\PI$.
Choosing an invariant non-degenerate symmetric bilinear form on $\fg$,
there is a non-degenerate pairing, for each $k=0,1,2$,
\begin{equation}\label{eqn:pair}
H^k(\tPI,\fg_\Adtp)\times H^{2-k}(\tPI,\fg_\Adtp)\to H^2(\tPI,\bC)\cong\bC.
\end{equation}
When $k=1$, this is the symplectic form \eqref{eqn:omp} on
$H^1(\tPI,\fg_\Adtp)$.
Since the pairing \eqref{eqn:pair} changes sign under $\tau$, it induces
non-degenerate pairings
\[ H^k(\tPI,\fg_\Adtp)^\tau\times H^{2-k}(\tPI,\fg_\Adtp)^{-\tau}\to\bC. \]
On the other hand, $H^1(\PI,\fg_\Adp)\cong H^1(\tPI,\fg_\Adtp)^\tau$ is a
Lagrangian subspace in $H^1(\tPI,\fg_\Adtp)$.

Another important consequence is that for the fundamental group $\PI$ of
a non-orientable surface, $H^2(\PI,\fg_\Adp)$ is not isomorphic to
$H^0(\PI,\fg_\Adp)$.
Indeed, \eqref{eqn:pair} implies that $H^2(\tPI,\fg_\Adtp)$ is isomorphic to
$H^0(\tPI,\fg_\Adtp)=\fg^\tPI$.
Whereas
$H^0(\PI,\fg_\Adp)\cong H^0(\tPI,\fg_\Adtp)^\tau=\fg^\tPI\cap\ker(\Ad_c-1)$,
we have
$H^2(\PI,\fg_\Adp)\cong H^2(\tPI,\fg_\Adtp)^\tau
\cong H^0(\tPI,\fg_\Adtp)^{-\tau}=\fg^\tPI\cap\ker(\Ad_c+1)$, and hence
\begin{equation}\label{eqn:decomp}
H^0(\tPI,\fg_\Adtp)\cong H^2(\tPI,\fg_\Adtp)\cong
H^0(\PI,\fg_\Adp)\oplus H^2(\PI,\fg_\Adp).
\end{equation}
Therefore $H^2(\tPI,\fg_\Adtp)=0$ implies $H^2(\PI,\fg_\Adp)=0$, but the
converse is not true (see Appendix).

Suppose $\PI$ is the fundamental group of an orientable surface and
$\phi\in\Hom(\PI,G)$.
In \cite{G}, Fox calculus was used to show that the rank of the map
$(d\br)_\phi$ equals the codimension of centraliser of $\phi$.
In this case, the Lie algebra of the centraliser $H^0(\PI,\fg_\Adp)$ is
isomorphic to the second cohomology group $H^2(\PI,\fg_\Adp)$.
If $\PI$ is the fundamental group of a non-orientable surface, we showed
that the codimension of $\im(d\br)_\phi$ equals the dimension of
$H^2(\PI,\fg_\Adp)$ (Proposition~\ref{pro:one}), but $H^2(\PI,\fg_\Adp)$ is
not isomorphic to $H^0(\PI,\fg_\Adp)$ as explained above.
We can verify this in another way by giving an explicit formula for
$H^2(\PI,\fg_\Adp)$ using Fox calculus.
If $M$ is the connected sum of $h$ copies of $\bR P^2$, then its fundamental
group $\PI=\pi_1(M)$ is generated by $x_1,\dots,x_h$ subject to one relation
$\prod_{i=1}^hx_i^2=e$.

\begin{pro}\label{pro:H02}
Let $\PI$ be the fundamental group of a non-orientable surface as above and
let $G$ be a complex reductive Lie group.
Then for any $\phi\in\Hom(\PI,G)$, we have
\[ H^0(\PI,\fg_\Adp)\cong\bigcap_{i=1}^h\ker(\Ad_{\phi(x_i)}-1),\quad
   H^2(\PI,\fg_\Adp)\cong\bigcap_{i=1}^h\ker(\Ad_{\phi(x_i)}+1).      \]
\end{pro}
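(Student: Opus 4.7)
The plan is to dispatch the $H^0$ part by inspection and to derive the $H^2$ formula from the free resolution \eqref{eqn:resol} via Fox calculus and a finite-dimensional duality argument. For $H^0(\PI,\fg_\Adp)=\fg^\PI$, the subspace fixed by every $\Ad_{\phi(g)}$, the identification with $\bigcap_i\ker(\Ad_{\phi(x_i)}-1)$ is immediate because $x_1,\ldots,x_h$ generate $\PI$. All the substance is in $H^2$.

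For $H^2$, my first step is to check (for $h\geq 2$) that $r=\prod_{i=1}^h x_i^2$ is not a proper power in $F$. Abelianising, any $r=u^n$ with $n\geq 2$ would give $n\cdot(\text{abelianisation of }u)=2(e_1+\cdots+e_h)$ in $\bZ^h$, forcing $n=2$ and $u$ to have length $h$ with each $x_i$ appearing exactly once with exponent $+1$; then $u$ is a permutation word and $u^2$ opens with two distinct generators, contradicting that $r$ opens with $x_1x_1$. By the discussion in \S\ref{sec:fox} (cf.~Proposition~\ref{pro:one}), this forces $M_3=0$ in the resolution, and hence $H^2(\PI,\fg_\Adp)\cong\coker(d_2^\vee)$ with $d_2^\vee:\fg^h\to\fg$. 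Iterating the Leibniz rule $\pdr_i(x_j^2)=\del_{ij}(1+x_j)$ through the factors of $r$ yields $\pdr_i r=x_1^2\cdots x_{i-1}^2(1+x_i)$, so by \eqref{eqn:fox}, writing $A_i:=\Ad_{\phi(x_i)}$,
\[ d_2^\vee(\gam_1,\ldots,\gam_h)=\sum_{i=1}^h A_1^2\cdots A_{i-1}^2(1+A_i)\gam_i. \]

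To convert this cokernel into the desired intersection, I would dualise using the $G$-invariant non-degenerate symmetric bilinear form on $\fg$; $G$-invariance forces $A_i^\T=A_i^{-1}$, so a finite-dimensional duality identifies $\coker(d_2^\vee)\cong\ker((d_2^\vee)^\T)$, with the $i$-th component of $(d_2^\vee)^\T\eta$ equal to $(1+A_i^{-1})A_{i-1}^{-2}\cdots A_1^{-2}\eta$. A short induction on $i$ then finishes the argument: the $i=1$ equation forces $A_1\eta=-\eta$, hence $A_1^{-2}\eta=\eta$; substituting into the $i=2$ equation collapses it to $A_2\eta=-\eta$, and so on, so that $\ker((d_2^\vee)^\T)$ equals exactly $\bigcap_i\ker(A_i+1)$. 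The only non-routine part of the argument is the non-proper-power check that guarantees the resolution terminates at $M_2$; everything else is a direct Fox-calculus and linear-algebra computation.
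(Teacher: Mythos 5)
Your approach is essentially the paper's: both dispatch $H^0$ by inspection, compute the Fox derivatives $\pdr_ir=\big(\prod_{j<i}x_j^2\big)(1+x_i)$, and then dualise with the invariant bilinear form (using $\Ad_{\phi(w)}^\T=\Ad_{\phi(w)}^{-1}$) to turn $\coker$ into an intersection of kernels, finishing with the same telescoping induction.

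The one genuine wrinkle is the non-proper-power check, which is both unnecessary and, as written, has a gap. It is unnecessary because Proposition~\ref{pro:one} already gives $H^2(\PI,\fg_\Adp)\cong\coker(d\tilde r)_\phi$ for \emph{any} single relator, proper power or not, so you may cite it directly and skip the structure of $M_3$. It has a gap because the abelianisation $\bar u=(1,\ldots,1)$ does not by itself force $u$ to be a length-$h$ permutation word: a reduced word such as $x_1x_2x_1^{-1}x_3x_1x_4\cdots x_h$ has the same abelianisation but length $h+2$. To make this line of argument airtight you would also need to observe that $r=\prod_ix_i^2$ is cyclically reduced (its first and last letters are distinct generators), and that a cyclically reduced word which is a proper power $u^n$ must arise by concatenating a cyclically reduced $u$ exactly $n$ times with no cancellation; only then does $|u|=h$ follow, after which your permutation-word contradiction goes through. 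Since the paper sidesteps this entirely by invoking Proposition~\ref{pro:one}, I would recommend doing the same; the rest of your computation is correct and identical in substance to the paper's.
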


\begin{proof}
The first equality is obvious because $H^0(\PI,\fg_\Adp)=\fg^\PI$.
Since the presentation of $\PI$ has a single relator $r=\prod_{i=1}^hx_i^2$,
we have, by Proposition~\ref{pro:one},
$H^2(\PI,\fg_\Adp)\cong\coker(d\tilde r)_\phi$, which can be calculated by
Fox derivatives as follows.
By \eqref{eqn:fox}, we obtain
$\im(d\tilde r)_\phi=\sum_{k=1}^h\im(\Ad_{\phi(\pdr_kr)})$, where the Fox
derivatives are $\pdr_kr=\big(\prod_{i=1}^{k-1}x_i^2\big)(x_k+1)$ for
$1\le k\le h$.
Choose a non-degenerate invariant symmetric bilinear form on $\fg$.
By the identity $\im(S(T+1))^\perp\cong\ker((T+1)S^{-1})$ for (complex)
orthogonal transformations $S$ and $T$ on $\fg$, we get
\[ \coker(d\tilde r)_\phi\cong\im(d\tilde r)_\phi^\perp=\bigcap_{k=1}^h
\ker\Big((\Ad_{\phi(x_k)}+1)\prod_{i=1}^{k-1}\Ad_{\phi(x_{k-i})}^{-2}\Big), \]
which agrees with our result on $H^2(\PI,\fg_\Adp)$ upon a simplification.
\end{proof}

The quotient map $\PI\to\bZ_2$ is given by sending any
$\prod_{k=1}^lx_{i_k}^{m_k}$ (with $m_k\in\bZ$) in $\PI$ to
$\sum_{k=1}^lm_k\;(\!\!\!\!\mod 2)$.
Its kernel $\tPI$ is generated by $x_ix_j$ ($1\le i,j\le h$), and we can
choose any $x_i$ as $c\in\PI\backslash\tPI$.
It is clear that
\[ \bigcap_{i,j=1}^h\ker(\Ad_{\phi(x_i)\phi(x_j)}-1)=
   \bigcap_{i=1}^h\ker(\Ad_{\phi(x_i)}-1)\oplus
   \bigcap_{i=1}^h\ker(\Ad_{\phi(x_i)}+1),   \]
as subspaces of $\fg$, verifying the decomposition \eqref{eqn:decomp} using
Fox calculus.

Proposition~\ref{pro:H02} provides an explicit formula for the second
cohomology group $H^2(\PI,\fg_\Adp)$ of the fundamental group $\PI$ of a
non-orientable surface $M$.
If we use other presentations of $\PI$ (for example, the one used in
\cite{HL,HWW}), similar explicit formulas for $H^2$ exist, also verifying
\eqref{eqn:decomp}.
With the presentation here, if there is no non-zero vector $\xi\in\fg$
satisfying $\Ad_{\phi(x_i)}\xi=-\xi$ for all $1\le i\le h$, then $\phi$
is a smooth point on $\Hom(\PI,G)\subset G^h$, and
$T_\phi\Hom(\PI,G)\cong\fg^{h-1}$.
If furthermore the stabiliser of $\phi$ is $Z(G)$, then $[\phi]$ is in the
smooth part $\cR_\circ(\PI,G)$ of the character variety, whose complex
dimension is $(h-2)\dim_\bC G+\dim_\bC Z(G)$.
Since the double cover $\tM$ has genus $h-1$, the complex dimension of
$\cR_\circ(\PI,G)$ is half of that of $\cR_\circ(\tPI,G)$.
This agrees with the statement that $H^1(\PI,\fg_\Adp)$ is a Lagrangian
subspace of $H^1(\tPI,\fg_\Adtp)$.

\section{Relation of the two approaches}\label{sec:compare}
Let $M$ be a closed manifold and let $G$ be a reductive complex Lie group.
We denote by $\cA^\fla(P)$ the space of flat connections on a principal
$G$-bundle $P$ over $M$.
Let $\cA^\fla(M,G)$ be the union of $\cA^\fla(P)$ for all topological types
of $P$ and let $\cG(M,G)$ be the group of gauge transformations that is
$\cG(P)$ on $\cA^\fla(P)$.
We equip $\cA^\fla(M,G)$ and $\cG(M,G)$ with the ``smooth topology'', in
which convergence means that in the Sobolev norm $\|\cdot\|_{2,k}$ for every
$k$.

Choose base points $x\in M$ and $p$ in the fibre over $x$, and let
$\PI=\pi_1(M,x)$.
Then there is a holonomy map $\hol_p\colon\cA(M,G)\to\Hom(\PI,G)$, sending
a flat connection $A$ to the homomorphism
$\hol_p(A)\colon[\al]\mapsto\hol_\al(A)\in G$,
where $\al$ is a based loop in $(M,x)$ representing $[\al]\in\PI$ and the
parallel transport along $\al$ sends $p$ to $p\hol_\al(A)$.
So the map $\hol_p$ is obtained from solving a family of first order
differential equations parametrised by $\cA(M,G)$.
It is continuous and equivariant with respect to the actions of $\cG(M,G)$
on $\cA^\fla(M,G)$ and $G$ on $\Hom(\PI,G)$.
Moreover, it yields a 1-1 correspondence between the $\cG(M,G)$-orbits
in $\cA^\fla(M,G)$ and the $G$-orbits in $\Hom(\PI,G)$ \cite[\S5]{GM}.

By the definitions of reductive flat connections in $\cA^\fla(M,G)$ and
reductive homomorphisms in $\Hom(\PI,G)$, it is obvious that a flat connection
$A$ is reductive if and only if the homomorphism $\phi:=\hol_p(A)$ is
reductive.
Recall that $\phi$ is reductive if and only if its $G$-orbit $G\cdot\phi$ is
closed in $\Hom(\PI,G)$ (cf.~\S\ref{sec:luna}).
We claim that a flat connection $A$ is reductive if and only if its orbit
under the group of gauge transformations is closed.
In fact, if $A$ is reductive, then so is $\phi$ and the orbit $G\cdot\phi$
is closed.
{}From the above properties of the map $\hol_p$, it is easy to see that the
orbit of $A$ is also closed.
Conversely, if a flat connection $A$ is not reductive, then there is sequence
of gauge transformations of $A$ that goes to a limit outside the orbit
\cite[Prop 3.2]{Co}, and the orbit of $A$ is not closed.

So $\hol_p$ induces a 1-1 continuous map from the moduli space $\cM^\fla(M,G)$
to the character variety $\cR(\PI,G)$, since both spaces are constructed from
taking quotients of reductive objects.
On the other hand, from a homomorphism $\phi\in\Hom(\PI,G)$, we can construct
in a standard way a $G$-bundle $\oM\times_\phi G$ (where $\oM$ is the
universal cover of $M$) over $M$ with a flat connection whose holonomy is
$\phi$.
This gives the inverse map from $\cR(\PI,G)$ to $\cM^\fla(M,G)$, which is also
continuous.
Consequently, the spaces $\cM^\fla(M,G)$ and $\cR(\PI,G)$ are homeomorphic.

Suppose $\tM\to M$ is a regular covering and $\Gam$ is the group of deck
transformations.
Let $\PI=\pi_1(M)$ and $\tPI=\pi_1(\tM)$ as before.
Then there is a fibration $E\Gam\times_\Gam\tM\to B\Gam$ whose fibre is $\tM$
and whose total space, being the Borel construction of $\tM$ with the free
$\Gam$-action, is homotopic to $\tM/\Gam=M$.
Let $G$ be a complex reductive Lie group and let $P\to M$ be a principal
$G$-bundle with a flat connection.
Then there is a $G$-bundle on the total space $E\Gam\times_\Gam\tM$ whose
restriction to the fibre $\tM$ is the pull-back $\tP\to\tM$ of $P$.
There is a spectral sequence with
$\cE_2^{pq}=H^p(\Gam,H^q(\tM,\ad\tP))$ converging to $H^k(M,\ad P)$.
This is called the Cartan-Leray spectral sequence associated to the regular
covering $\tM\to M=\tM/\PI$ \cite[\S VII.7]{B}, and it is the gauge theoretic
analog of the Lyndon-Hochschild-Serre spectral sequence in group cohomology
associated to $1\to\tPI\to\PI\to\Gam\to1$ with
$E_2^{pq}=H^p(\Gam,H^q(\tPI,\fg_\Adtp))$, converging to $H^k(\PI,\fg_\Adp)$
(cf.~\S\ref{sec:surf}).
Since $H^0(\tM,\ad\tP)=H^0(\tPI,\fg_\Adtp)$, we have $\cE_2^{k0}=E_2^{k0}$
for all $k\ge0$.

As in \S\ref{sec:surf}, we apply the result to the case where $M$ is a
compact non-orientable manifold and $\tM$ is its orientation double cover.
Then $\Gam=\bZ_2$ and the non-trivial element in $\bZ_2$ acts on the
cohomology groups $H^k(\tM,\ad\tP)$ by $\tau$ (cf.~\ref{sec:non-g}).
Since $\cE_2^{pq}=0$ for $p>0,q\ge0$, we obtain
$H^k(M,\ad P)=\cE_2^{0k}=H^k(\tM,\ad\tP)^\tau$ for all $k\ge0$.
This is the gauge theoretic counterpart of the statement
$H^k(\PI,\fg_\Adp)\cong H^k(\tPI,\fg_\Adtp)^\tau$ in \S\ref{sec:surf}.
Taking $k=1$, we conclude that the (formal) tangent space of the moduli space
of flat $G$-connections on $M$ is the $\bZ_2$-invariant subspace of the
tangent space to the moduli space of flat $G$-connections on $\tM$.

Another important example is when $\tM$ is the universal cover $\oM$ of $M$.
Then $\PI=\Gam$ and the pullback of $P$ to $\oM$ is topologically the product
bundle $\oM\times G$.
We have $H^q(\oM,\ad\tP)\cong H^q(\oM)\otimes\fg_\Adp$, where the group $\PI$
acts on $H^q(\oM)$ by the pullback of deck transformations as well as on $\fg$
by the homomorphism $\phi\colon\PI\to G$ corresponding to the flat connection.
Since $H^1(\oM)=0$, we have $\cE_2^{p1}=0$ for all $p\ge0$, and
$H^1(M,\ad P)=\cE_2^{10}=H^1(\PI,\fg_\Adp)$.
This means that the (formal) tangent space of the moduli space of flat
$G$-connections on $M$ in the gauge theoretic approach
(cf.~\S\ref{sec:smooth-g}) is identical to that of the character variety
in a more algebraic approach (cf.~\S\ref{sec:luna}).

To find information on the second cohomology groups, we note that since
$\cE_\infty^{11}\subset\cE_2^{11}=0$, there is a short exact sequence
$0\to\cE_\infty^{20}\to H^2(M,\ad P)\to\cE_\infty^{02}\to0$.
Here $\cE_\infty^{20}=\cE_3^{20}=\coker(\cE_2^{01}\to\cE_2^{20})=\cE_2^{20}
=H^2(\PI,\fg_\Adp)$ and
$\cE_\infty^{02}=\ker(\cE_3^{02}\to\cE_3^{30})=\ker(\cE_2^{02}\to\cE_2^{30})
=\ker(H^2(\oM,\fg_\Adp)^\PI\to H^3(\PI,\fg_\Adp))$.
By the Hurewicz theorem, we have $H_2(\oM)\cong\pi_2(\oM)\cong\pi_2(M)$, and
the isomorphisms are equivariant under $\PI$.
Hence $H^2(\oM,\fg_\Adp)^\PI\cong\Hom(H_2(\oM),\fg_\Adp)^\PI\cong
\Hom(\pi_2(M),\fg_\Adp)^\PI$.
On the other hand, we have $\cE_2^{30}\subset H^3(M,\ad P)$ and because of
$\cE_2^{11}=0$ again, we have $\cE_2^{3,0}=\coker(\cE_2^{02}\to\cE_2^{30})$.
So there is an exact sequence (cf.~\cite[Exer.~6, page~175]{B})
\begin{equation}\label{eqn:2H2}
0\to H^2(\PI,\fg_\Adp)\to H^2(M,\ad P)\to\Hom(\pi_2(M),\fg_\Adp)^\PI\to
H^3(\PI,\fg_\Adp)\to H^3(M,\ad P).
\end{equation}
This means that the second cohomology groups $H^2(M,\ad P)$ and
$H^2(\PI,\fg_\Adp)$ that appear as obstructions to smoothness in the two
approaches (cf.~\S\ref{sec:smooth-g} and \S\ref{sec:smooth-r}) are not
identical but differ by
$\cE_\infty^{02}=\ker(\Hom(\pi_2(M),\fg_\Adp)^\PI\!\to\!H^3(\PI,\fg_\Adp))$.
However, if $\pi_2(M)=0$, then $H^2(M,\ad P)\cong H^2(\PI,\fg_\Adp)$.
This is the case if the universal cover $\oM$ is contractible, for example if
$M$ is an orientable surface of genus $g\ge1$, a non-orientable surface which
is the connected sum of $h\ge2$ copies of $\bR P^2$, a hyperbolic $3$-manifold,
or a K\"ahler manifold which is a ball quotient.

In \S\ref{sec:smooth-g}, smoothness at a point of the (gauge-theoretic) moduli
space requires, among other things, the condition $H^2(M,\ad'P)=0$, which
appears stronger than its algebraic version $H^2(\PI,\fg'_{\Ad'\circ\phi})=0$
by \eqref{eqn:2H2} as explained above.
On the other hand, by applying the implicit function theorem in
\S\ref{sec:smooth-r} and \S\ref{sec:fox}, the character variety $\cR(\PI,G)$
is smooth at points $[\phi]$ where $H^2(\PI,\fg'_{\Ad'\circ\phi})=0$ and
$\ev_R$ is surjective.
While the condition $H^2(\PI,\fg'_{\Ad'\circ\phi})=0$ is common in the two
approaches, it would be interesting to compare the additional requirements
that differ.
Fortunately, for orientable and non-orientable surfaces, these additional
conditions happen to be vacuous, though for different reasons, and we arrive
at the same condition $H^2(\PI,\fg'_{\Ad'\circ\phi})=0$ from both methods.
If this way, the subset $\cM^\fla_\circ(P)$ (cf.~\S\ref{sec:smooth-g}) in the
smooth part of the moduli space $\cM^\fla(M,G)$ coincides with its counterpart
$\cR_\circ(\PI,G)$ (cf.~\S\ref{sec:luna}) in the smooth part of the character
variety $\cR(\PI,G)$, and the homeomorphism between $\cM^\fla(M,G)$ and
$\cR(\PI,G)$ restricts to a diffeomorphism between $\cM^\fla_\circ(M,G)$ and
$\cR_\circ(\PI,G)$.

\renewcommand{\thesection}{A}
\section*{Appendix. Some examples}
Let $G$ be a reductive Lie group.
Suppose $M$ is a non-orientable manifold and $\pi\colon\tM\to M$ is its
orientation double cover.
A flat $G$-connection $D$ on $M$ lifts to a flat connection $\tD$ on $\tM$.
Alternatively, $D$ determines a homomorphism $\phi\colon\PI\to G$, where
$\PI=\pi_1(M)$.
The homomorphism $\tphi\colon\tPI\to G$ from $\tD$ is the restriction of
$\phi$ to $\tPI=\pi_1(\tM)$, which is an index-$2$ subgroup of $\PI$.
We compare various conditions on $D$ (or $\phi$) and $\tD$ (or $\tphi$).

First, $D$ (or $\phi$) is {\em simple} if the Lie algebra of its stabiliser
is that of $Z(G)$.
The condition is equivalent to $H^0(M,\ad'P)=0$ or
$H^0(\PI,\fg_{\Ad'\circ\phi})=0$.
Clearly, $D$ (or $\phi$) is simple if $\tD$ (or $\tphi$) is so, but the
converse is not true.
In the examples below, we utilise the standard Pauli matrices
\[ \sig_1={\quad\;\;1\choose1\;\;\quad},\quad
\sig_2={\qquad\;-\ii\,\choose\ii\;\;\quad\qquad},\quad
\sig_3={1\qquad\choose\quad-1}                           \]
satisfying $\sig_i\sig_j=-\sig_j\sig_i$ ($i\ne j$) and $\sig_i^2=I_2$,
$\tr(\sig_i)=0$, $\det(\sig_i)=-1$ ($i=1,2,3$).

\begin{eg}\label{eg:simple}{\rm
Take $M=\bR P^2\#\bR P^2$ or $\PI$ generated by $x_1,x_2$
subject to a relation $x_1^2x_2^2=1$.
Let $G=\mathrm{SL}(2,\bC)$ (whose centre is finite) and $\phi$ be defined by
$\phi(x_1)=\ii\sig_1$, $\phi(x_2)=\ii\sig_2$.
Then by Proposition~\ref{pro:H02}, we obtain $H^0(\PI,\fg_\Adp)=0$ and
$H^2(\PI,\fg_\Adp)\cong\bC\sig_3$.
On the other hand, by \eqref{eqn:decomp} or by a direct calculation, we
obtain $H^0(\tPI,\fg_\Adtp)\cong H^2(\tPI,\fg_\Adtp)\cong\bC\sig_3$.
So $\phi$ is simple but $\tphi$ is not.}
\end{eg}

Following \cite{Si}, we say that $D$ (or $\phi$) is {\em irreducible} if
it is reductive and simple.
In \cite{HWW}, we proved that $D$ (or $\phi$) is reductive if and only if
$\tD$ (or $\tphi$) is so.
Thus if $\tD$ (or $\tphi$) is irreducible, then so is $D$ (or $\phi$).
The converse is not true because $\phi$ in Example~\ref{eg:simple} is indeed
reductive and hence irreducible, while $\tphi$ is not irreducible.

Using \eqref{eqn:decomp}, we also conclude that
$H^2(\tPI,\fg_{\Ad'\circ\tphi})=0$ implies $H^2(\PI,\fg_{\Ad'\circ\phi})=0$.
But the converse is not true either as shown in the following example.

\begin{eg}\label{eg:h2}{\rm
Take the same $M$ (or $\PI$) and $G$ as in Example~\ref{eg:simple} but let
$\phi$ be defined by
\[  \phi(x_1)=\exp(\pi\ii\sig_3/4)
    ={\zeta_8\qquad\;\;\choose\qquad\zeta_8^{-1}},\quad
\phi(x_2)=\exp(-\pi\ii\sig_3/4)
    ={\zeta_8^{-1}\qquad\choose\qquad\;\;\zeta_8}, \]
where $\zeta_8:=e^{\pi\ii/4}$ is the $8$th root of unity.
Then $H^2(\PI,\fg_\Adp)=0$ whereas}
\[ H^2(\tPI,\fg_\Adtp)\cong H^0(\tPI,\fg_\Adtp)\cong H^0(\PI,\fg_\Adp)\cong
\bC\sig_3\ne0. \]
\end{eg}

Curiously, the points $[\phi]$ represented by $\phi$ in
Examples~\ref{eg:simple} and \ref{eg:h2} are both smooth in the character
variety $\cR(\PI,g)$ despite having a non-zero $H^0$ or $H^2$.
In fact, $\cR(\PI,g)$ consists of two branches: one with $\phi(x_1)^2\in\ZG$,
parametrised by $\phi(x_1x_2)\in G$ modulo conjugation, the other being the
closure of points with $\phi(x_1)^2\not\in\ZG$,
parametrised by $\phi(x_1)^2\in G$ modulo conjugation (aside from the
remaining discrete ambiguities).
The two branches intersect at four singular points with
$\phi(x_1),\phi(x_2)\in\ZG$, but are otherwise smooth and have complex
dimension $1$.
So $\cR_\circ(\PI,G)=\emptyset$, but the generic points in $\cR(\PI,G)$ are
smooth.
Also in this case, $\tPI$ is generated by $x_1x_2$ and $x_1^2$ that
commute with each other.
Since the minimal stabiliser of a generic point in $\cR(\tPI,G)$ is a complex
$1$-dimensional subgroup in $G$, the points $[\tphi]$ in
Examples~\ref{eg:simple} and \ref{eg:h2} are both smooth in $\cR(\tPI,G)$,
despite failing to be simple or having a non-zero $H^2$.

\begin{eg}\label{eg:sing}{\rm
To produce singular points from a non-zero $H^0$ or $H^2$, we take
$M=\bR P^2\#\bR P^2\#\bR P^2\#\bR P^2$ or $\PI$ with four generator
$x_{\pm1},x_{\pm2}$ subject to a relation $x_1^2x_{-1}^2x_2^2x_{-2}^2=1$,
and $G=\mathrm{SL}(2,\bC)$.
For $t$ in the interval $(-1/4,3/4)$, we define $\phi_t\in\Hom(\PI,G)$ by
\[ \phi_t(x_{\pm1})=\pm\ii\sig_1,\quad\phi_t(x_{\pm2})
=\exp(\pm\pi t\ii\sig_2)=\cos(\pi t)I_2\pm\ii\sin(\pi t)\sig_2.\]
Then from Proposition~\ref{pro:H02} we have
\[ H^0(\PI,\fg_\Adp)=\two{\bC\sig_1}{t=0}{0}{t\ne0,}\qquad
   H^2(\PI,\fg_\Adp)=\two{\bC\sig_3}{t=1/2}{0}{t\ne1/2.}  \]
Recall the Betti numbers $b_i=\dim_\bC H^i(\PI,\fg_\Adp)$ ($i=0,1,2$).
Since $b_0-b_1+b_2=\chi(M)\dim_\bC G=-6$ is fixed, the expected dimension
$b_1$ of the tangent space of $\cR(\PI,G)$ at $[\phi_t]$ jumps at $t=0$
and $t=1/2$.
Therefore the character variety $\cR(\PI,G)$ is singular at both $[\phi_0]$
and $[\phi_{1/2}]$, due to the non-minimality of $H^0(\PI,\fg_\Adp)$ and
$H^2(\PI,\fg_\Adp)$, respectively.
This shows that the condition on $H^2$ is needed for smoothness when it is
not isomorphic to $H^0$, as in the case of orientable surface.
The points $[\tphi_0]$ and $[\tphi_{1/2}]$ are singular on $\cR(\tPI,G)$
because of the non-minimality of
$H^0(\tPI,\fg_\Adtp)\cong H^2(\tPI,\fg_\Adtp)$ at $t=0$ and $t=1/2$.
Moreover, since $\cR(\PI,G)$ is connected \cite[Thm B(d)]{BC} and so is
$\cR(\tPI,G)$ \cite{Li}, all smooth points of $\cR(\PI,G)$ and $\cR(\tPI,G)$
are in $\cR_\circ(\PI,G)$ and $\cR_\circ(\tPI,G)$, respectively.}
\end{eg}

A flat $G$-connection $D$ (or $\phi\in\Hom(\PI,G)$) is {\em good}
(cf.~\cite{JM}) if it is reductive and its stabiliser is precisely $Z(G)$.
If $D$ (or $\phi$) is good, it is irreducible, but the converse is not true.
In Example~\ref{eg:nonCI}, we will encounter a case in which the stabiliser
contains the centre $Z(G)$ as a proper subgroup of finite index.
It is also clear that $D$ (or $\phi$) is good if $\tD$ (or $\tphi$) is so.
But the converse is not true either.
In Example~\ref{eg:simple}, the Lie algebra of the stabiliser changes when $D$
is pulled back to $\tM$ (or when $\phi$ is restricted to $\tPI$).
We will construct an example in which the stabiliser changes even if its Lie
algebra remains the same when $D$ is pulled back to $\tM$.
To achieve this, we need to choose a reductive group $G$ that does not have
property CI of \cite{Si}, so that there exists an irreducible subgroup in $G$
whose centraliser is a non-trivial finite extension of $Z(G)$.

\begin{eg}\label{eg:nonCI}{\rm
We choose the simplest non-CI group $G=\mathrm{PSL}(2,\bC)$ and denote its
elements by $\pm g$, where $g\in\mathrm{SL}(2,\bC)$.
Its centre $Z(G)$ is trivial, but there is a finite Abelian subgroup
$H:=\{\pm I_2,\pm\ii\sig_1,\pm\ii\sig_2,\pm\ii\sig_3\}$, isomorphic to the Klein $4$-group, whose centraliser in $G$ is $H$ itself \cite{Si}.
We let $M=\bR P^2\#\bR P^2\#\bR P^2$ or $\PI$ be generated by $x_1,x_2,x_3$
subject to a relation $x_1^2x_2^2x_3^2=1$.
Define $\phi$ by
\[ \phi(x_1)=\pm{\zeta_8\;\;\qquad\choose\qquad\zeta_8^{-1}},\quad
\phi(x_2)=\pm{\zeta_8^{-1}\qquad\choose\qquad\;\;\zeta_8},\quad
\phi(x_3)=\pm{\qquad-\zeta_8\choose\zeta_8^{-1}\qquad}.     \]
Then the centraliser of $\phi(\PI)$ in $G=\mathrm{PSL}(2,\bC)$ is trivial.
Since $\tPI$ is generated by $x_ix_j$ ($1\le i,j\le3$), it is easy to check
that $\phi(\tPI)=H$ and hence the stabiliser of $\tphi=\phi|_\tPI$ is $H$.
So $\phi$ is good but $\tphi$ is not.}
\end{eg}

In Example~\ref{eg:nonCI}, since $H^2(\tPI,\fg_\Adtp)=0$, we have
$H^0(\PI,\fg_\Adp)=H^2(\PI,\fg_\Adp)=0$, and hence $[\phi]\in\cR_\circ(\PI,G)$
is a smooth point on $\cR(\PI,G)$.
However, $\cR(\tPI,G)$ is not smooth at $[\tphi]$.
Let $\phi_t$ ($t\in\bR$) be given by
\[ \phi_t(x_1)=\phi(x_1)\exp(\pi t\ii\sig_3),\quad
   \phi_t(x_2)=\phi(x_2)\exp(-\pi t\ii\sig_3),\quad\phi_t(x_3)=\phi(x_3) \]
and consider the path $[\tphi_t]$ in $\cR(\tPI,G)$.
Then the stabiliser of $\tphi_t$ is trivial when $t$ is near but not equal to
$0$ and is $H$ when $t=0$.
So $\cR(\tPI,G)$ has an orbifold singularity at $[\tphi_0]=[\tphi]$.

\end{document}